\numberwithin{equation}{section}
\newtheorem{thm}[equation]{Theorem}
\newtheorem{prop}[equation]{Proposition}
\newtheorem{cor}[equation]{Corollary}
\theoremstyle{definition}
\newtheorem{defn}[equation]{Definition}
\newtheorem{ex}[equation]{Example}
\newtheorem{remark}[equation]{Remark}
\DeclareMathOperator{\ch}{\mathrm{ch}}
\DeclareMathOperator{\td}{\mathrm{td}}
\DeclareMathOperator{\so}{\mathrm{SO}}
\DeclareMathOperator{\Sp}{\mathrm{Sp}}
\DeclareMathOperator{\ha}{\mathbb{H}_{\beta}}
\begin{document}

\title{Indices of quaternionic complexes}
\author{Old\v{r}ich Sp\'{a}\v{c}il}
\address{Department of Mathematics and Statistics, Masaryk University, Kotl\'{a}\v{r}sk\'{a} 2, 611 37 Brno, Czech Republic}
\email{151393@mail.muni.cz}

\begin{abstract}
Methods of parabolic geometries have been recently used to construct a class of elliptic complexes on quaternionic manifolds, the Salamon's complex being the simplest case. The purpose of this paper is to describe an algorithm how to compute their analytical indices in terms of characteristic classes. Using this, we are able to derive some topological obstructions to existence of quaternionic structures on manifolds.
\end{abstract}

\subjclass[2000]{53C15, 53C26, 57R20, 58J10}
\keywords{Quaternionic manifold, elliptic complex, analytical index, quaternionic structures, integrality conditions}

\maketitle


\section{Introduction}

Quaternionic geometry has become a classical part of modern differential geometry. The purpose of the present paper is to contribute to the study of differential operators which are intrinsic to quaternionic manifolds and to the study of topology of such manifolds. The connection is given via the analytical index of elliptic complexes of differential operators.

A large class of elliptic complexes on quaternionic manifolds has been recently constructed in a much more general setting in \cite{SUB}. The point is that quaternionic geometry is an example of the so-called \emph{parabolic geometries} (see \cite{PAR}) and there is a well-developed machinery to construct invariant differential operators on such geometries. These operators fit naturally into sequences, the \emph{curved Bernstein-Gelfand-Gelfand sequences}, see \cite{BGG}. Although a BGG-sequence does not form a complex in general, it was shown in \cite{SUB} that under some torsion-freeness like conditions it contains many subcomplexes. For quaternionic manifolds some of them turned out to be elliptic.

Our aim was to compute the analytical indices of these elliptic complexes on compact manifolds. Some results in this direction were obtained in \cite{Bas} for another class of elliptic complexes in the case of hyperk\"{a}hler manifolds and in \cite{LBS} for positive quaternion K\"{a}hler manifolds. However, it seems that there is no such result for general quaternionic manifolds.

Techniques of both these articles rely heavily on a correspondence between the quaternionic manifold and its twistor space, which is a complex manifold and thus one can apply the Hirzebruch-Riemann-Roch theorem. Our approach is more elementary and computational in nature, the main tool is a version of the Atiyah-Singer index theorem for elliptic complexes associated to a $G$-structure. The theorem provides us with a topological formula involving several characteristic classes but to simplify this we need some "common language." Luckily, this was described in \cite{QS}, where a quaternionic structure is viewed as an action of a bundle of quaternion algebras. Using this and the method of Borel and Hirzebruch (see \cite{BH}) for computation of Chern classes, we have developed an algorithm how to obtain a formula for the analytical index for each given complex and in each given dimension.

The resulting formulas are given in terms of the Pontryagin classes of the manifold and one another cohomology class which is related to the quaternionic structure of the manifold. By taking integral linear combinations of the index formulas it is possible to eliminate this class and derive some integrality conditions on the existence of a quaternionic structure on a compact manifold (and not only of an almost quaternionic structure), for an example see Corollary \ref{cond}.   


\section{Quaternionic manifolds and quaternionic complexes}\label{1sec}

Let us first recall the classical definition of quaternionic manifolds via $G$-structures. View $\mathbb{R}^{4m}$ as the space $\mathbb{H}^{m}$ of $m$-tuples of quaternions. Then the group $\Sp(1)$ of unit quaternions acts on it by $a\cdot v=v\bar{a}$ and $\mathrm{GL}(m,\mathbb{H})$ acts by left matrix multiplication. These actions induce injections $\Sp(1) \rightarrow \mathrm{GL}(4m,\mathbb{R})$ and $\mathrm{GL}(m,\mathbb{H}) \rightarrow\mathrm{GL}(4m,\mathbb{R})$. We define the group $\Sp(1)\mathrm{GL}(m,\mathbb{H})$ as the product of these two groups in $\mathrm{GL}(4m,\mathbb{R})$. More abstractly, $\Sp(1)\mathrm{GL}(m,\mathbb{H})$ is isomorphic to the quotient $\Sp(1)\times_{\mathbb{Z}_2}\mathrm{GL}(m,\mathbb{H})$, where a pair $(a,A)\in \Sp(1)\times\mathrm{GL}(m,\mathbb{H})$ represents the linear mapping $v\mapsto Av\bar{a}$. In the sequel we write simply $G$ instead of $\Sp(1)\mathrm{GL}(m,\mathbb{H})$.

\begin{defn}
A $4m$-dimensional manifold $M$, $m\geq 2$, is called \emph{almost quaternionic} if it has a $G$-structure, i.e. there is a principal $G$-bundle $\mathcal{P}$ and an isomorphism of vector bundles $\mathcal{P}\times_{G} \mathbb{R}^{4m} \cong TM$. An almost quaternionic manifold is called \emph{quaternionic} if the $G$-structure $\mathcal{P}$ admits a torsion-free connection. 
\end{defn}

Denote by $G_1$ the group $\Sp(1)\times\mathrm{GL}(m,\mathbb{H})$, which is the double cover of $G$. The principal $G$-bundle $\mathcal{P}$ can be locally lifted to a principal $G_1$-bundle $\mathcal{P}_1$. Then if $\rho\colon G_1 \to \mathrm{GL}(\mathbb{V})$ is a representation of $G_1$, we can locally construct the associated vector bundle $V = \mathcal{P}_1\times_{G_1} \mathbb{V}$. This vector bundle exists globally if either the lift can be done globally or the action $\rho$ factors through $G$. 

Let $\mathbb{E}$ and $\mathbb{F}$ denote the standard complex $\Sp(1)$-module and $\mathrm{GL}(m,\mathbb{H})$-module, respectively. These can be also viewed as $G_{1}$-modules by composing the corresponding representations with the projections onto the two factors. Then the $G_{1}$-module $\mathbb{E}\otimes_{\mathbb{C}}\mathbb{F}$ descends to a $G$-module and one can show that we have the following isomorphisms of vector bundles  
\begin{equation}\label{EF} T^{*}M\otimes_{\mathbb{R}}\mathbb{C} \cong \mathcal{P}\times_{G} (\mathbb{E}\otimes_{\mathbb{C}}\mathbb{F})  \cong \mathcal{P}_{1}\times_{G_1} (\mathbb{E}\otimes_{\mathbb{C}}\mathbb{F}) \cong E\otimes_{\mathbb{C}}F.\end{equation}
However, note that the bundles $E$ and $F$ may not exist globally.

An easy consequence of the above decomposition is that there is a natural subcomplex of the de\,Rham complex of complex-valued differential forms. Indeed, decomposing the $G_1$-module $\Lambda^{j}(\mathbb{E}\otimes \mathbb{F})$ into the sum of irreducible $G_{1}$-modules we can find a summand $\mathbb{A}^{j}$ isomorphic to $S^{j}\mathbb{E}\otimes \Lambda^{j}\mathbb{F}$. This is in fact a $G$-module because the action of $G_1$ on $S^{j}\mathbb{E}\otimes \Lambda^{j}\mathbb{F}$ factors through $G$. Therefore, in view of \eqref{EF} this implies that there is a vector subbundle $A^{j}\subseteq \Lambda^{j}(T^{*}M\otimes\mathbb{C})$ such that
\begin{equation}\label{Abundle} A^{j} = \mathcal{P}\times_{G} \mathbb{A}^{j} \cong S^{j}E\otimes\Lambda^{j}F.\end{equation}
Let $d$ denote the exterior derivative and $p_{j}\colon \Lambda^{j}(T^{*}M\otimes\mathbb{C}) \to A^{j}$ the projection. If we put $d_{j}=p_{j}\circ d$, we obtain the following sequence of differential operators
\begin{equation}\label{complex} 0 \rightarrow \Gamma A^0 \xrightarrow{d_{1}} \Gamma A^1 \xrightarrow{d_{2}} \Gamma A^{2} \xrightarrow{d_3} \cdots \xrightarrow{d_{2m}} \Gamma A^{2m} \rightarrow 0.\end{equation}
This sequence is closely related to the (almost) quaternionic structure of the manifold.

\begin{prop}[Salamon]
An almost quaternionic manifold $M$ is quaternionic if and only if the sequence (\ref{complex}) is a complex. If this is the case, then the complex is elliptic.
\end{prop}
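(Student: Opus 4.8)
The statement splits into two essentially independent claims: that the symbol sequence of \eqref{complex} is exact at every nonzero covector (ellipticity), and that the composites $d_{j+1}\circ d_j$ vanish exactly when a torsion-free compatible connection exists. I would treat ellipticity first, since it is purely algebraic and holds irrespective of integrability.

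For ellipticity, fix $x\in M$ and a nonzero $\xi\in T^*_x M$; the principal symbol of $d_j$ is $\sigma_\xi(d_j)=p_j\circ(\xi\wedge-)\colon A^{j-1}_x\to A^{j}_x$, so I must show the resulting sequence $0\to A^0\to\cdots\to A^{2m}\to0$ is exact. After complexifying and using \eqref{EF} to write $T^*_xM\otimes\mathbb{C}\cong E\otimes F$, the key observation is that a nonzero real covector has rank two: viewing $\xi\in E\otimes F=\mathrm{Hom}(E^*,F)$, reality under the quaternionic structure $j_E\otimes j_F$ forbids $\xi$ from being decomposable (since $j_E e$ is never proportional to $e$), so $\xi\colon E^*\to F$ is injective. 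Choosing a basis $e_1,e_2$ of $E$ and completing $f_1=\xi(e^1),\,f_2=\xi(e^2)$ to a basis $f_1,\dots,f_{2m}$ of $F$, one has $\xi=e_1\otimes f_1+e_2\otimes f_2$, and on $\bigoplus_j S^jE\otimes\Lambda^jF$ the symbol becomes $m_{e_1}\otimes(f_1\wedge-)+m_{e_2}\otimes(f_2\wedge-)$, where $m_{e_i}$ is multiplication in $S^\bullet E$. Splitting $F=\langle f_1,f_2\rangle\oplus F'$ shows this differential preserves the $\Lambda^\bullet F'$-degree and acts on the remaining factor as the Koszul differential of the regular sequence $(e_1,e_2)$ in $\mathbb{C}[e_1,e_2]$. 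The symbol sequence thus decomposes as a direct sum of shifted Koszul complexes, each exact, which proves ellipticity. It is precisely the rank-two property, false for rank-one covectors, that makes these Koszul complexes exact, so this step genuinely uses the quaternionic structure.

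For the complex condition I would fix a connection $\nabla$ compatible with the $G$-structure. Since $p_j$ is $\nabla$-parallel and $d^2=0$, one has $d_{j+1}d_j=-p_{j+1}\,d\,(1-p_j)\,d$, whose second-order symbol vanishes by the symbol computation of the previous step; hence each $d_{j+1}d_j$ has order $\le1$. Writing $d=\mathrm{Alt}\circ\nabla$ up to a zeroth-order torsion correction, a direct computation shows that the obstruction to $d_{j+1}d_j=0$ is controlled by the torsion $T^\nabla$; in particular, when $\nabla$ is torsion-free one has $d=\mathrm{Alt}\circ\nabla$ and $d_{j+1}d_j$ reduces to a tensorial expression in the curvature $R^\nabla$. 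Replacing $\nabla$ by another compatible connection changes $T^\nabla$ by the image of the algebraic Spencer differential $\partial\colon T^*M\otimes\mathfrak{g}\to\Lambda^2T^*M\otimes TM$, so the genuine invariant is the \emph{intrinsic torsion}, the class of $T^\nabla$ in $\mathrm{coker}\,\partial$; by definition $M$ is quaternionic precisely when this class vanishes.

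It remains to match the two vanishing conditions, and this is the step I expect to be the main obstacle. In the direction ``quaternionic $\Rightarrow$ complex'' one takes $\nabla$ torsion-free, so that $d_{j+1}d_j$ is a curvature term, and one must check that $p_{j+1}$ annihilates it using the restricted $G$-representation type of the curvature of a torsion-free quaternionic connection. Conversely, one must show that the first non-trivial composite is, up to a nonzero equivariant constant, the intrinsic torsion itself, so that its vanishing forces the intrinsic torsion to vanish. Both hinge on a branching computation: one has to identify the intrinsic-torsion module of an almost quaternionic structure (a specific, explicitly known $G$-module for $m\ge2$) inside $\mathrm{Hom}(A^{j-1},A^{j+1})$ and verify that the relevant projection neither kills it nor introduces spurious components. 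Carrying out this decomposition into irreducible $G$-modules, rather than any analytic estimate, is the real content of the proposition; the ellipticity and the reduction to the intrinsic torsion are comparatively formal.
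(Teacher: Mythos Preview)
The paper does not give its own proof of this proposition; it simply refers to \cite{Sal}, Theorem~4.1. Your outline is in the spirit of Salamon's original argument, so there is no genuine divergence from the paper's (deferred) approach.

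Your ellipticity argument is essentially complete and correct: the rank-two property of a nonzero real covector in $\mathbb{E}\otimes\mathbb{F}$, the identification of the projected symbol with $m_{e_1}\otimes(f_1\wedge-)+m_{e_2}\otimes(f_2\wedge-)$, and the reduction to a Koszul-type complex on a regular sequence of length two are exactly the ingredients Salamon uses.

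For the equivalence with integrability, your intrinsic-torsion framework is the right one, and you honestly flag the remaining representation-theoretic verification as the main gap. One small correction: in the direction ``quaternionic $\Rightarrow$ complex'' the curvature plays no role. With $\nabla$ torsion-free and $G$-compatible one has $d=\mathrm{Alt}\circ\nabla$; since $\nabla$ preserves the complement $B^{j}$ of $A^{j}$ in $\Lambda^{j}$, and since $\mathbb{E}\otimes S^{j-2r}\mathbb{E}$ (for $r\ge 1$) contains no $S^{j+1}\mathbb{E}$ summand, the composite $p_{j+1}\circ\mathrm{Alt}$ already vanishes on $T^{*}M\otimes B^{j}$ as a bundle map---the same $\mathbb{E}$-weight bookkeeping that drives your Koszul argument. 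The substantive step, as you say, is the converse: identifying one of the composites $d_{j+1}d_{j}$ with the intrinsic torsion up to a nonzero equivariant constant. This is precisely what Salamon carries out, and your outline of it is accurate but not executed.
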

\begin{proof}
The proof can be found in \cite{Sal}, Theorem 4.1. 
\end{proof}

In the rest of the paper, the elliptic complex \eqref{complex} will be called the \emph{Salamon's complex} because it was first constructed by S.\,Salamon in \cite{Sal} together with several other elliptic complexes, which may not exist globally. Another class of elliptic complexes on quaternionic manifolds was described by R.\,Baston in \cite{Bas}. Recently, A.\,\v{C}ap and V.\,Sou\v{c}ek applied the theory of parabolic geometries to construct a wide class of complexes of differential operators as subcomplexes in the so-called curved BGG-sequences, see \cite{SUB}, and many of them turned out to be elliptic. We will describe these complexes in a convenient way without giving their explicit construction. This can be found in \cite{SUB}, the theory of BGG-sequences in \cite{BGG} and the general theory of parabolic geometries in \cite{PAR}.      

According to \cite{Sal} the complex representation theories of the Lie groups $\mathrm{GL}(m,\mathbb{H})\subset \mathrm{GL}(2m,\mathbb{C})$ and $\mathrm{U}(2m)$ are equivalent. In particular, irreducible complex representations of $\mathrm{GL}(m,\mathbb{H})$ may be identified with highest weights for $\mathrm{U}(2m)$. For each $k\geq 0$ put \begin{equation}\label{w}\mathbb{W}_{k}^{j} = S^{j+k}\mathbb{E}\otimes (\Lambda^{j}\mathbb{F}\otimes S^{k}\mathbb{F}^{*})_{0} \quad \text{for } j<2m, \quad \mathbb{W}_{k}^{2m} = S^{2(m+k)}\mathbb{E}\otimes \Lambda^{2m}\mathbb{F},\end{equation} where the zero subscript denotes the irreducible component of the tensor product with the highest weight being the sum of the highest weights of the respective factors. Note that the $\mathbb{W}_{k}^{j}$ are $G_{1}$ as well as $G$-modules.

\begin{prop}[\v{C}ap, Sou\v{c}ek]\label{quatcomplex}
Let $M$ be a $4m$-dimensional quaternionic manifold with a $G$-structure $\mathcal{P}$. Denote by $W_{k}^{j}$ the associated vector bundle $\mathcal{P}\times_{G} \mathbb{W}_{k}^{j}$. Then for each $k\geq 0$ there is an elliptic complex $$D_{k}\colon \quad 0\to \Gamma W_{k}^{0} \xrightarrow{D} \Gamma W_{k}^{1} \xrightarrow{D} \cdots \xrightarrow{D} \Gamma W_{k}^{2m} \to 0.$$ 
\end{prop}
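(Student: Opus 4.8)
The plan is to realize the statement inside the general theory of parabolic geometries. First I would record that an almost quaternionic structure is exactly a parabolic geometry of type $(G,P)$, where $\mathfrak{g}$ is the real form $\mathfrak{sl}(m+1,\mathbb{H})$ of $\mathfrak{sl}(2m+2,\mathbb{C})$ carrying the $|1|$-grading $\mathfrak{g}=\mathfrak{g}_{-1}\oplus\mathfrak{g}_0\oplus\mathfrak{g}_1$, with reductive Levi subgroup $G_0\cong G=\Sp(1)\mathrm{GL}(m,\mathbb{H})$ and with $\mathfrak{g}_{-1}\cong\mathbb{E}\otimes\mathbb{F}$ as a $G_0$-module, so that $TM\cong\mathcal{P}\times_G\mathfrak{g}_{-1}$ reproduces \eqref{EF}; in this language being quaternionic means precisely that the canonical Cartan connection is torsion-free. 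The first step is then to pin down the bundles $W_k^j$: for each $k$ I would take the irreducible $G$-module $\mathbb{V}_k$ whose lowest $G_0$-component matches $\mathbb{W}_k^0=S^k\mathbb{E}\otimes S^k\mathbb{F}^*$ and compute the Lie algebra cohomology $H^j(\mathfrak{g}_1,\mathbb{V}_k)$ by Kostant's theorem. Its summands are indexed by the length-$j$ elements $w$ of the Hasse diagram $W^{\mathfrak{p}}$, with highest weight $w(\lambda+\rho)-\rho$; the routine but bookkeeping-heavy task is to check that the relevant summand is exactly $\mathbb{W}_k^j$ of \eqref{w}, including the Cartan component $(\Lambda^j\mathbb{F}\otimes S^k\mathbb{F}^*)_0$ and the special form of the top term $\mathbb{W}_k^{2m}$.

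Second, I would produce the operators $D$. The curved BGG machinery of \cite{BGG} associates to $\mathbb{V}_k$ a sequence of invariant differential operators between the bundles induced by $H^j(\mathfrak{g}_1,\mathbb{V}_k)$, and the weight shift $S^{j+k}\mathbb{E}\to S^{j+1+k}\mathbb{E}$ shows each $D$ is first order. On the homogeneous model this sequence is a resolution, hence a complex, but on a curved geometry $D\circ D$ is in general a nonzero curvature term. Here I would invoke torsion-freeness: as in \cite{SUB}, for a torsion-free geometry the obstruction to $D\circ D=0$ lies in a component built solely from the torsion, so that the chosen family $W_k^\bullet$ survives as an honest subcomplex. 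This is the first genuinely nontrivial input, and it is exactly where the hypothesis \emph{quaternionic}, rather than merely almost quaternionic, is used.

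Third, and this is where I expect the main difficulty, is ellipticity. The principal symbols of BGG operators coincide with those of the homogeneous model and are the algebraic Kostant maps: at a real covector the symbol $\sigma_\xi\colon\mathbb{W}_k^j\to\mathbb{W}_k^{j+1}$ is, up to a nonzero scalar, the $G_0$-equivariant map induced by $\xi\in\mathbb{E}\otimes\mathbb{F}$, namely symmetric multiplication in the $\mathbb{E}$-factor and wedging in the $\mathbb{F}$-factor, followed by projection onto the component $(-)_0$. Ellipticity is the exactness of the sequence $(\mathbb{W}_k^\bullet,\sigma_\xi)$ for every $\xi\neq 0$, and the crucial observation is a rank statement for covectors: the real structure on $\mathbb{E}\otimes\mathbb{F}$ is $j_{\mathbb{E}}\otimes j_{\mathbb{F}}$ with $j_{\mathbb{E}}^2=j_{\mathbb{F}}^2=-1$, so a nonzero real $\xi$ cannot be decomposable (if $\xi=e\otimes f$ then reality would force $j_{\mathbb{E}}$ to preserve the line $\mathbb{C}e$, impossible for a conjugate-linear map with $j_{\mathbb{E}}^2=-1$); since $\dim_{\mathbb{C}}\mathbb{E}=2$ this forces rank exactly two, $\xi=e_1\otimes f_1+e_2\otimes f_2$ with $\{e_1,e_2\}$ a basis of $\mathbb{E}$ and $f_1,f_2\in\mathbb{F}$ linearly independent.

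Finally, using this normal form I would check exactness directly. With $e_1,e_2$ a basis of $\mathbb{E}$ the symbol becomes $e_1\otimes(f_1\wedge-)+e_2\otimes(f_2\wedge-)$, the differential of a Koszul-type complex; completing $f_1,f_2$ to a basis of $\mathbb{F}$ and filtering by the number of $f_1,f_2$ already present in the $\Lambda\mathbb{F}$-factor reduces exactness to acyclicity of the elementary two-variable Koszul complex, the only subtlety being to track the projection onto $(-)_0$ and the passive factor $S^k\mathbb{F}^*$ in the middle degrees, together with the modified top term. I regard this symbol computation as the principal obstacle: the representation-theoretic identification of the first step and the complex property of the second are conceptually clear once the parabolic picture and torsion-freeness are in place, whereas ellipticity genuinely rests on the special rank-two geometry of quaternionic covectors and on a Koszul-exactness argument that must be carried out uniformly in $k$ and across all degrees $j$, including the exceptional behaviour at $j=2m$.
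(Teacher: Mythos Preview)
The paper does not actually prove this proposition: its ``proof'' consists of a reference to \cite{SUB} together with the remark that torsion-freeness is essential. Your outline is, in spirit, a reconstruction of that cited argument: identify quaternionic geometry as the $|1|$-graded parabolic geometry for $\mathfrak{sl}(m+1,\mathbb{H})$, match the bundles $W_k^j$ with Kostant cohomology, obtain the operators from the curved BGG construction, use torsion-freeness to get $D\circ D=0$ on the relevant subcomplex, and verify ellipticity via the rank-two normal form of real covectors in $\mathbb{E}\otimes\mathbb{F}$. So you are not taking a different route from the paper; you are filling in what the paper delegates to \cite{SUB}, and the main ingredients you list are the right ones.

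There is, however, one concrete inaccuracy you should fix. You assert that ``the weight shift $S^{j+k}\mathbb{E}\to S^{j+1+k}\mathbb{E}$ shows each $D$ is first order.'' This is correct for $0\le j\le 2m-2$, but look again at the definition \eqref{w}: the top bundle is $\mathbb{W}_k^{2m}=S^{2(m+k)}\mathbb{E}\otimes\Lambda^{2m}\mathbb{F}$, so the last operator raises the $\mathbb{E}$-degree from $2m-1+k$ to $2m+2k$, a jump of $k+1$. For $k\ge 1$ the final BGG operator therefore has order $k+1$, and its principal symbol is not simply ``multiply by $e$, wedge by $f$'': it is a degree-$(k+1)$ polynomial in $\xi$. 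Your Koszul-type exactness argument must be adjusted at the top step accordingly (this is exactly the ``exceptional behaviour at $j=2m$'' you anticipate, but it is more than a bookkeeping issue---the symbol itself changes shape). Once that is corrected, the rank-two observation about real covectors still drives the exactness, but you will need a separate verification that the higher-order top symbol is surjective for $\xi\neq 0$.
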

\begin{proof}
For the proof see \cite{SUB}. Let us remark, however, that the existence of a compatible torsion-free connection is essential in the construction. Therefore, this result does not hold for manifolds which are only almost quaternionic.
\end{proof}

The differential operators $D$ are explicitly constructed in \cite{BGG} in a much more general setting of parabolic geometries. In the case of quaternionic manifolds they are shown to be \emph{strongly invariant}, which implies that the symbol of $D$ is induced by a $G$-equivariant polynomial map $\mathbb{R}^{4m} \to L(\mathbb{W}_{k}^{j}, \mathbb{W}_{k}^{j+1})$. This will be cleared out in the next section.

The goal of the paper is to compute the analytical indices of the elliptic complexes $D_{k}$. Note that by setting $k=0$ we obtain precisely the Salamon's complex $\eqref{complex}$. Moreover, the complex $D_{1}$ may be naturally interpreted as a deformation complex for quaternionic structures, see \cite{DEF}.


\section{Atiyah-Singer index formula}
In this section we present a version of the Atiyah-Singer index theorem for elliptic complexes associated to $G$-structures, which is our main tool for computation of the indices. At the end we apply this to the Salamon's complex on manifolds with a $\mathrm{GL}(m,\mathbb{H})$-structure. 

Let $G$ be a Lie group and let $M$ be a compact oriented manifold with a $G$-structure $\mathcal{P}$, i.e. there is a principal $G$-bundle $\mathcal{P} \to M$ and a real oriented $G$-module $\mathbb{V}$ such that $TM \cong \mathcal{P}\times_{G} \mathbb{V}$. Let $\mathbb{E}^{j}$, $0\leq j\leq r$, be complex $G$-modules and put $E^{j} = \mathcal{P}\times_{G}\mathbb{E}^{j}$. Denote by $D$ an elliptic complex $$0 \rightarrow \Gamma E^0 \xrightarrow{d_0} \Gamma E^1 \xrightarrow{d_1} \cdots \xrightarrow{d_{r-1}} \Gamma E^{r} \rightarrow 0$$ of differential operators on $M$. Suppose that $\varphi_{j}\colon \mathbb{V}^{*} \to L(\mathbb{E}^{j},\mathbb{E}^{j+1})$ is a $G$-equivariant polynomial map such that for all $v\in \mathbb{V}^{*}, v\neq 0$, the sequence
\begin{equation}\label{univ}0\rightarrow \mathbb{E}^{0} \xrightarrow{\varphi_{0}(v)} \mathbb{E}^{1} \xrightarrow{\varphi_{1}(v)} \cdots \xrightarrow{\varphi_{r-1}(v)} \mathbb{E}^{r} \rightarrow 0\end{equation}
is exact. If the symbol $\sigma_{D}$ of $D$ is induced via the isomorphisms $T^{*}M\cong \mathcal{P}\times_{G} \mathbb{V}^{*}$ and $E^{j}\cong \mathcal{P}\times_{G} \mathbb{E}^{j}$ from the sequence \eqref{univ}, then we say that $\sigma_{D}$ is \emph{associated to the $G$-structure $\mathcal{P}$}. More explicitly, the symbol of the operator $d_{j}$ is a fibrewise polynomial bundle map $$\sigma_{d_{j}}\colon T^{*}M \cong \mathcal{P}\times_{G} \mathbb{V}^{*} \to L(E^{j}, E^{j+1}) \cong \mathcal{P}\times_{G} L(\mathbb{E}^{j}, \mathbb{E}^{j+1})$$ and we require that $\sigma_{d_{j}}([p,v]) = [p, \varphi_{j}(v)]$.

For example, the symbol of the de\,Rham complex of complex-valued differential forms at $(x,v)\in T^{*}M$ is given by the exterior product on $v$ and this is clearly associated via the mappings $\varphi_{j}(v) = v\wedge-$ to a $\so(m)$-structure of $M$ induced by a Riemannian metric and an orientation. 

As was already noted at the end of the previous section, the differential operators forming the quaternionic complexes from Proposition \ref{quatcomplex} are strongly invariant. This implies (see \cite{BGG}) that their symbols are associated to the $\Sp(1)\mathrm{GL}(m,\mathbb{H})$-structure of our quaternionic manifold $M$.

Before we state the version of the Atiyah-Singer index formula we will need, let us recall some basic facts on classifying spaces. Let $G$ be a compact Lie group and let $EG\to BG$ denote the universal principal $G$-bundle over the classifying space $BG$. If $\mathcal{P} \to M$ is a principal $G$-bundle, then there is up to homotopy a unique map $f\colon M \to BG$ such that $\mathcal{P}\cong f^{*}EG$, the pullback bundle. In cohomology, $f$ induces a ring homomorphism $f^{**}\colon H^{**}(BG;\mathbb{Q}) \to H^{**}(M;\mathbb{Q})$, where $H^{**}(-;\mathbb{Q}) = \prod_{k\geq 0}H^{k}(-;\mathbb{Q})$. Finally, it is shown in \cite{Bor} that $H^{**}(BG;\mathbb{Q})$ is a ring of formal power series in several indeterminates with rational coefficients, hence an integral domain. In the following, $\ch$ denotes the Chern character and $\td$ the Todd class.

\begin{prop}[Atiyah, Singer]\label{formula}
Let $M$ be a compact oriented manifold of dimension $2m$, $G$ a compact Lie group and $\rho\colon G\to \mathrm{SO}(2m)$ a Lie group homomorphism. Assume that $M$ has a $G$-structure $\mathcal{P}$, i.e. $TM$ is associated to $\mathcal{P}$ via $\rho$. Let $\mathbb{E}^{j}$, $0\leq j \leq r$, be complex $G$-modules and let $E^{j}$ be the corresponding associated vector bundles. Suppose that $$0 \rightarrow \Gamma E^0 \xrightarrow{d_0} \Gamma E^1 \xrightarrow{d_1} \cdots \xrightarrow{d_{r-1}} \Gamma E^{r} \rightarrow 0$$ is an elliptic complex with its symbol associated to the $G$-structure $\mathcal{P}$. Let $f\colon M \to BG$ be the classifying map for the bundle $\mathcal{P}$. Put $\widetilde{E}^{j}=EG\times_{G} \mathbb{E}^{j}$ and $\widetilde{V}=EG\times_{\rho} \mathbb{R}^{2m}$. If the Euler class  $e(\widetilde{V})$ is nonzero, then it divides $\sum (-1)^{j}\mathrm{ch}\,\widetilde{E}_{j}\in H^{**}(BG;\mathbb{Q})$ and the index of the above complex is given by $$(-1)^{m}\left\{f^{**}\left(\frac{\sum_{j=0}^{r} (-1)^{j}\mathrm{ch}\,\widetilde{E}^{j}}{e(\widetilde{V})} \right)\cdot\mathrm{td}(TM\otimes\mathbb{C})\right\}[M].$$
\end{prop}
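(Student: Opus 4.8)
The plan is to derive this formula from the standard Atiyah--Singer index theorem for elliptic complexes, rewriting the cohomological integrand so that all characteristic classes live on the classifying space $BG$ and are pulled back via $f$. The starting point is the classical statement that the index of the elliptic complex equals
\begin{equation*}
(-1)^{m}\left\{\frac{\sum_{j=0}^{r}(-1)^{j}\ch E^{j}}{e(TM)}\cdot\td(TM\otimes\mathbb{C})\right\}[M],
\end{equation*}
which is the usual form of the index theorem once the symbol class $\sum(-1)^{j}[\sigma_{d_j}]\in K(TM)$ is expressed through the Chern characters of the bundles $E^{j}$; the hypothesis that the symbol is associated to the $G$-structure is exactly what guarantees the symbol sequence is the one coming from the fibrewise-exact complex \eqref{univ}, so that its $K$-theory class is the formal alternating sum and no extra twisting correction appears. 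I would first recall or cite this reduction, emphasizing that the exactness of \eqref{univ} for all $v\neq0$ is what makes $D$ elliptic and identifies its symbol class.

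Next I would transport everything to $BG$. Because $TM\cong\mathcal{P}\times_G\mathbb{R}^{2m}$ is associated via $\rho$ and $\mathcal{P}\cong f^{*}EG$, naturality of characteristic classes gives $E^{j}\cong f^{*}\widetilde{E}^{j}$ and $TM\otimes\mathbb{C}\cong f^{*}(\widetilde{V}\otimes\mathbb{C})$, whence
\begin{equation*}
\ch E^{j}=f^{**}(\ch\widetilde{E}^{j}),\qquad e(TM)=f^{**}(e(\widetilde{V})),\qquad \td(TM\otimes\mathbb{C})=f^{**}(\td(\widetilde{V}\otimes\mathbb{C})).
\end{equation*}
Here I use that $f^{**}$ is a ring homomorphism, so it commutes with products and with the Chern character. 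The one delicate point is the quotient by the Euler class: on $M$ the class $e(TM)$ need not be invertible, so the fraction in the classical formula is a priori only meaningful after the numerator has been shown to be divisible by $e(TM)$. The clean way around this is to perform the division upstairs in $H^{**}(BG;\mathbb{Q})$, which is an integral domain by the cited result of Borel. This is the crux of the argument and the main obstacle.

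To handle it, I would argue that divisibility holds universally on $BG$. The numerator $\sum(-1)^{j}\ch\widetilde{E}^{j}$ is, up to the Todd factor, the universal symbol class of the fibrewise-exact complex \eqref{univ} of $G$-modules; since that sequence is exact away from the zero section, its class is supported on the zero section and hence, by the Thom isomorphism on the universal bundle $\widetilde{V}\to BG$, is divisible by the Euler class $e(\widetilde{V})$ whenever $e(\widetilde{V})\neq0$. Concretely, the alternating sum of Chern characters of an exact-off-zero complex is a multiple of the Thom class, whose image in $H^{**}(BG;\mathbb{Q})$ under the Gysin map is $e(\widetilde{V})$ times an honest class; integrality of the domain then lets me define the quotient $\bigl(\sum(-1)^{j}\ch\widetilde{E}^{j}\bigr)/e(\widetilde{V})$ unambiguously in $H^{**}(BG;\mathbb{Q})$. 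Applying $f^{**}$ to this well-defined quotient and multiplying by $\td(TM\otimes\mathbb{C})$ reproduces the classical integrand, because $f^{**}$ respects the division and $f^{**}(e(\widetilde{V}))=e(TM)$.

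Finally I would assemble the pieces: evaluating on the fundamental class $[M]$ and pulling the scalar $(-1)^{m}$ through yields exactly the claimed formula. The verification that $e(\widetilde{V})$ nonzero forces divisibility and that the resulting quotient pulls back correctly is the heart of the proof; the remaining manipulations are formal consequences of the naturality of $\ch$, $\td$, and $e$ together with the multiplicativity of $f^{**}$. I expect the subtlety to lie entirely in making the division by a possibly non-invertible Euler class rigorous, for which the passage to the integral domain $H^{**}(BG;\mathbb{Q})$ and the Thom-isomorphism interpretation of the symbol class are the essential tools.
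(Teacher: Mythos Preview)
The paper does not actually prove this proposition: its entire proof reads ``The proof can be found in \cite{AS3}.'' Your sketch is a correct and reasonably detailed outline of how the formula is derived from the standard cohomological form of the Atiyah--Singer index theorem, and it is essentially the argument one finds in \cite{AS3}. In particular, your identification of the two genuine issues---that the symbol being associated to the $G$-structure is what lets one write the symbol class as the alternating sum pulled back from $BG$, and that the division by the Euler class must be performed in the integral domain $H^{**}(BG;\mathbb{Q})$ rather than on $M$---matches the content of the cited reference, so there is nothing to correct.
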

\begin{proof}
The proof can be found in \cite{AS3}.
\end{proof}

The proposition will be the main tool for our computation of the indices in question. However, we first need to reduce the structure group of the principal $\Sp(1)\mathrm{GL}(m,\mathbb{H})$-bundle $\mathcal{P}$ of the quaternionic manifold $M$ to a compact subgroup. But this can be easily done by introducing a Riemannian metric on $M$ because $(\Sp(1)\mathrm{GL}(m,\mathbb{H})) \cap \mathrm{O}(4m) = \Sp(1)\Sp(m)$ is a compact subgroup of $\mathrm{SO}(4m)$. Moreover, it follows that a quaternionic manifold is always orientable and thus Proposition \ref{formula} may be applied if the universal Euler class is not zero. This will be shown in the next section, where we study certain characteristic classes useful for the actual calculation of the index formula.

\begin{remark}[Chern classes]\label{borhirz}
For the computation of Chern classes and Chern characters we use the approach of Borel and Hirzebruch from \cite{BH} which relates characteristic classes of associated vector bundles to weights of the corresponding representations. Let us outline this in a few lines.

Let $\lambda\colon G \to \mathrm{U}(\mathbb{V}) \cong \mathrm{U}(n)$ be a complex representation of a compact Lie group $G$. Let $S\subseteq G$ be a maximal torus of $G$ such that $S$ is mapped via $\lambda$ to the maximal torus of all diagonal matrices in $\mathrm{U}(n)$. Suppose that $x_{1}, x_{2}, \ldots, x_{n}$ are the weights of $\lambda$ with respect to $S$. Having a principal $G$-bundle $\mathcal{P}\to M$, consider the associated vector bundle $V=\mathcal{P}\times_{G}\mathbb{V}$. Then the total Chern class, the Chern character and the Todd class of $V$ may be formally written as $$c(V) = 1 + c_{1}(V) + \ldots + c_{n}(V) = \prod_{j=1}^{m}(1+y_{j}), \quad \ch(V) = \sum_{j=1}^{n} \mathrm{e}^{y_{j}},\quad \td(V) = \prod_{j=1}^{n}\frac{y_{j}}{1-\mathrm{e}^{-y_{j}}}.$$ for some two-dimensional integral cohomology classes $y_{j}$ derived from the weights $x_{j}$.

Similarly, the weights of the $G$-module $\Lambda^{k}\mathbb{V}$ are the sums $x_{j_1}+x_{j_2}+\ldots+x_{j_{k}}$, where $1\leq j_{1}< j_{2}<\ldots j_{k} \leq n,$ and thus we can again write the Chern classes and the Chern character $\Lambda^{j}(V)$ in terms of the $y_{j}$. The following formula for the Chern character of the formal polynomial $\Lambda_{t}(V)= \sum_{k=0}^{n} t^{k}\Lambda^{k}V$ will be useful $$\ch(\Lambda_{t}(V)) = \sum_{k=0}^{n} t^{k}\left(\sum_{1\leq j_1 <\ldots < j_{k}\leq n} \mathrm{e}^{y_{j_1}+\ldots+y_{j_{k}}} \right) = \prod_{j=1}^{n}(1+t\mathrm{e}^{y_{j}}).$$
\end{remark}
  
Before we continue to study quaternionic structures from a more topological viewpoint, let us apply Proposition \ref{formula} to compute the index of the Salamon's complex \eqref{complex} in the special case of manifolds admitting a $\mathrm{GL}(m,\mathbb{H})$-structure.

\begin{ex}\label{hyper}
Let $M$ be a compact $4m$-dimensional manifold with a $\mathrm{GL}(m,\mathbb{H})$-structure admitting a torsion-free connection. Because $\mathrm{GL}(m,\mathbb{H})$ can be identified with a subgroup of $\Sp(1)\mathrm{GL}(m,\mathbb{H})$, the manifold $M$ is quaternionic and so the Salamon's complex has sense. However, in this case both the bundles $E$ and $F$ exist globally and $E$ is trivial. Moreover, the cotangent bundle $T^{*}M$ is isomorphic to the complex vector bundle $F$ up to orientation -- for $m$ even the orientations coincide and for $m$ odd they are opposite. Indeed, $T^{*}M$ is oriented as a quaternionic vector bundle while $F$ as a complex vector bundle. The $\mathrm{GL}(m,\mathbb{H})$-modules $\mathbb{A}^{j}$ inducing the vector bundles in the Salamon's complex now look like $\mathbb{C}^{j+1}\otimes\Lambda^{j} \mathbb{F}$.
    
By introducing a Riemannian metric on $M$ we may reduce the $\mathrm{GL}(m,\mathbb{H})$-structure of $M$ to a $\Sp(m)$-structure $\mathcal{P}$ (not necessarily admitting a torsion-free connection) and then apply Proposition \ref{formula} with $\rho\colon \Sp(m)\hookrightarrow \so(4m)$ being the standard inclusion described in Section \ref{1sec} and $\mathbb{E}^{j}=\mathbb{A}^{j}$. The Euler class of the universal vector bundle  $\widetilde{V}=E\mathrm{Sp}(m)\times_{\rho}\mathbb{R}^{4m}$ is one of the generators of the cohomology ring of $B\mathrm{Sp}(m)$ and thus it is nonzero.

Consider the universal vector bundles $\widetilde{F}=E\mathrm{Sp}(m)\times_{\Sp(m)}\mathbb{F}$ and $\widetilde{A}^{j}=E\mathrm{Sp}(m)\times_{\Sp(m)}\mathbb{A}^{j}$. Then $\widetilde{V}\cong \widetilde{F}$ up to orientation and $\widetilde{A}^{j} \cong \mathbb{C}^{j+1}\otimes\Lambda^{j}\widetilde{F}$. Because $\widetilde{F}$ comes from a quaternionic vector bundle, we have  $\overline{\widetilde{F}} \cong \widetilde{F}$ and altogether this gives $$e(\widetilde{V})=(-1)^{m}c_{2m}(\widetilde{F}), \quad \td(\widetilde{V}\otimes\mathbb{C}) = \td(\widetilde{F}\oplus\overline{\widetilde{F}})=\td(\widetilde{F})^{2}.$$ Finally, let $f\colon M\to B\mathrm{Sp}(m)$ be the classifying map for $\mathcal{P}$. Then according to Proposition \ref{formula} the index of the Salamon's complex is given by
\begin{equation}\label{indhk}\mathrm{ind} = f^{**}\left(\frac{\sum_{j=0}^{2m} (-1)^{j}(j+1)\mathrm{ch}\,\Lambda^{j}\widetilde{F}}{(-1)^{m}c_{2m}(\widetilde{F})}\cdot \mathrm{td}(\widetilde{F})^2\right)[M].\end{equation}

To simplify this formula we compute the Chern classes of the complex vector bundle $\widetilde{F}$ as described in Remark \ref{borhirz}. Let $S$ be the maximal torus of $\Sp(m)$ consisting of all diagonal matrices with entries $\exp(2\pi\mathrm{i}x_{j})$, where $x_{j}\in\mathbb{R}$. Then the weights of the $\Sp(m)$-module $\mathbb{F}$ are $\pm x_{j}, 1\leq j\leq m$, viewed as linear forms on the Lie algebra $\mathfrak{s}$. It follows that the total Chern class and the Todd class of $\widetilde{F}$ may be written in the form $$ c(\widetilde{F}) = 1+c_{1}(\widetilde{F}) + \ldots +c_{2m}(\widetilde{F}) = \prod_{j=1}^{m}(1+y_{j})(1-y_{j}), \quad \td(\widetilde{F}) = \prod_{j=1}^{m} \frac{y_{j}(-y_{j})}{(1-\mathrm{e}^{-y_{j}})(1-\mathrm{e}^{y_{j}})}.$$ In particular, the last Chern class equals $c_{2m}(\widetilde{F}) = \prod_{j=1}^{m}y_{j}(-y_{j})$. 

The numerator of the fraction in \eqref{indhk} may be simplified as follows. Instead of $(-1)^{j}$ write $t^{j}$ and recall the formula for $\ch(\Lambda_{t}(V))$ from Remark \ref{borhirz}. Then
\begin{equation*}
\begin{split}
&\sum_{j=0}^{2m}\,(j+1)t^{j}\ch(\Lambda^{j}\widetilde{F}) = \frac{\mathrm{d}}{\mathrm{d}t}\left( \sum_{j=0}^{2m} t^{j+1}\ch (\Lambda^{j}\widetilde{F}) \right) = \\ &= \frac{\mathrm{d}}{\mathrm{d}t}\left( t\cdot \ch(\Lambda_{t}(\widetilde{F}) \right) = \frac{\mathrm{d}}{\mathrm{d}t}\left( t\prod_{j=1}^{m}(1+t\mathrm{e}^{y_{j}})(1+t\mathrm{e}^{-y_{j}})\right) = \\ &=\prod_{j=1}^{m}(1+t\mathrm{e}^{y_{j}})(1+t\mathrm{e}^{-y_{j}}) + t\sum_{j=1}^{m} (\mathrm{e}^{y_{j}}(1+t\mathrm{e}^{-y_{j}})+\mathrm{e}^{-y_{j}}(1+t\mathrm{e}^{y_{j}}))\prod_{\substack{k=1\\k\neq j}}^{m}(1+t\mathrm{e}^{y_{k}})(1+t\mathrm{e}^{-y_{k}}).
\end{split}
\end{equation*}      
Substituting $t=-1$ and collecting the terms we end up with $$\sum_{j=0}^{2m} (-1)^{j}(j+1)\mathrm{ch}\,\Lambda^{j}\widetilde{F} = (m+1)\prod_{j=1}^{m}(1-\mathrm{e}^{y_{j}})(1-\mathrm{e}^{-y_{j}})$$
The interior of the bracket in \eqref{indhk} now reads as $$\frac{(m+1)\prod_{j=1}^{m}(1-\mathrm{e}^{y_{j}})(1-\mathrm{e}^{-y_{j}})}{(-1)^{m}\prod_{j=1}^{m}y_{j}(-y_{j})} \cdot \left( \prod_{j=1}^{m} \frac{y_{j}(-y_{j})}{(1-\mathrm{e}^{-y_{j}})(1-\mathrm{e}^{y_{j}})} \right)^{2} = (-1)^{m}(m+1)\td(\widetilde{F}).$$    

Applying the map $f^{**}$ and evaluating on the fundamental class of $M$ we obtain the desired index. The complex tangent bundle $T^{c}M$ of $M$ is isomorphic to the vector bundle $F^{*}\cong F = \mathcal{P}\times_{\Sp(m)}\mathbb{F}$. We have thus proved the following theorem, which is our first partial result on indices of quaternionic complexes. 

\begin{thm}\label{hyperindex}
Let $M$ be a compact manifold with a $\mathrm{GL}(m,\mathbb{H})$-structure admitting a torsion-free connection. Then the index of the Salamon's complex is given by $$(-1)^{m}(m+1)\mathrm{td}(T^{c}M)[M].$$
\end{thm}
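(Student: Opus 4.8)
The plan is to specialize the Atiyah--Singer formula of Proposition~\ref{formula} to the compact structure group $\Sp(m)$ obtained from a choice of Riemannian metric, and then to express the entire index through the single complex bundle $\widetilde{F}=E\Sp(m)\times_{\Sp(m)}\mathbb{F}$. The first reduction is representation-theoretic: for a $\mathrm{GL}(m,\mathbb{H})$-structure the factor $\mathbb{E}$ is trivial, so the modules carried by the Salamon complex collapse to $\mathbb{A}^{j}\cong\mathbb{C}^{j+1}\otimes\Lambda^{j}\mathbb{F}$, whence $\widetilde{A}^{j}\cong\mathbb{C}^{j+1}\otimes\Lambda^{j}\widetilde{F}$. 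Since the underlying real bundle of $\widetilde{F}$ is $\widetilde{V}$ up to the orientation sign $(-1)^{m}$, and since $\widetilde{F}$ is self-conjugate as a quaternionic bundle, I would record at once that $e(\widetilde{V})=(-1)^{m}c_{2m}(\widetilde{F})$ and $\td(\widetilde{V}\otimes\mathbb{C})=\td(\widetilde{F})^{2}$. Substituting these data into Proposition~\ref{formula} reproduces formula \eqref{indhk}, whose only nontrivial ingredient is the alternating sum $\sum_{j}(-1)^{j}(j+1)\ch\Lambda^{j}\widetilde{F}$.

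The computational heart --- and the step I expect to be the main obstacle --- is the simplification of this weighted alternating sum. Following Remark~\ref{borhirz} I would introduce the weights $\pm y_{j}$ of $\mathbb{F}$, so that $c_{2m}(\widetilde{F})=\prod_{j}y_{j}(-y_{j})$, $\td(\widetilde{F})=\prod_{j}\tfrac{y_{j}(-y_{j})}{(1-e^{-y_{j}})(1-e^{y_{j}})}$, and $\ch(\Lambda_{t}\widetilde{F})=\prod_{j}(1+te^{y_{j}})(1+te^{-y_{j}})$. The trick is to recognise $\sum_{j}(j+1)t^{j}\ch\Lambda^{j}\widetilde{F}$ as $\tfrac{\mathrm{d}}{\mathrm{d}t}\!\left(t\,\ch(\Lambda_{t}\widetilde{F})\right)$ and then set $t=-1$. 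Differentiating the product produces a boundary term equal to $\prod_{j}(1-e^{y_{j}})(1-e^{-y_{j}})$ together with a sum over $j$ of the differentiated factor pairs; the key identity $e^{y_{j}}+e^{-y_{j}}-2=-(1-e^{y_{j}})(1-e^{-y_{j}})$ shows that at $t=-1$ each summand reconstitutes the full product, so the sum contributes exactly $m$ further copies. The net result is the clean expression $(m+1)\prod_{j}(1-e^{y_{j}})(1-e^{-y_{j}})$.

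With this in hand the rest is bookkeeping. Substituting into the bracket of \eqref{indhk}, the numerator factor $\prod_{j}(1-e^{y_{j}})(1-e^{-y_{j}})$ combines with one of the two copies of $\td(\widetilde{F})$ to leave $\prod_{j}y_{j}(-y_{j})$, which in turn cancels the Euler class; the surviving copy of $\td(\widetilde{F})$ and the inverted orientation sign $1/(-1)^{m}=(-1)^{m}$ then give precisely $(-1)^{m}(m+1)\td(\widetilde{F})$. Applying the ring homomorphism $f^{**}$, evaluating on the fundamental class $[M]$, and using the identification $T^{c}M\cong F$ yields the asserted index $(-1)^{m}(m+1)\td(T^{c}M)[M]$. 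The only genuine subtlety to guard against is the consistent tracking of the two competing orientations --- the quaternionic orientation of $M$ against the complex orientation of $\widetilde{F}$ --- through both the Euler class and the complexification $\widetilde{V}\otimes\mathbb{C}\cong\widetilde{F}\oplus\overline{\widetilde{F}}$; everything else follows mechanically from Proposition~\ref{formula} and the Borel--Hirzebruch calculus of Remark~\ref{borhirz}.
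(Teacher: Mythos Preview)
Your proposal is correct and follows essentially the same route as the paper: reduction to $\Sp(m)$, the identifications $\widetilde{A}^{j}\cong\mathbb{C}^{j+1}\otimes\Lambda^{j}\widetilde{F}$, $e(\widetilde{V})=(-1)^{m}c_{2m}(\widetilde{F})$ and $\td(\widetilde{V}\otimes\mathbb{C})=\td(\widetilde{F})^{2}$, the derivative trick $\sum_{j}(j+1)t^{j}\ch\Lambda^{j}\widetilde{F}=\tfrac{\mathrm{d}}{\mathrm{d}t}\bigl(t\,\ch(\Lambda_{t}\widetilde{F})\bigr)$, and the cancellation against one copy of $\td(\widetilde{F})$. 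Your use of the identity $e^{y_{j}}+e^{-y_{j}}-2=-(1-e^{y_{j}})(1-e^{-y_{j}})$ to explain why the differentiated sum at $t=-1$ contributes exactly $m$ further copies of the product is in fact more explicit than the paper, which simply writes ``collecting the terms.''
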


Note that such a manifold is a complex manifold and the number $\td(T^{c}M)[M]$ is the index of the Dolbeault complex associated to the complex tangent bundle $T^{c}M$ of $M$. In particular, it is an integer and so the above index is an integer divisible by $m+1$.

Let us remark here that by a different method very similar results were obtained in \cite{Bas} for certain class of quaternionic complexes. However, it is not clear to us whether the Salamon's complex was included.
\end{ex}

\section{Quaternionic structures}
This section is devoted to basic topological properties of quaternionic structures, the main reference here is \cite{QS}. In particular, we define characteristic classes for these structures. Throughout the section, $X$ denotes a compact Hausdorff topological space.

\begin{defn}
Let $\beta$ be an oriented real $3$-dimensional vector bundle over $X$ with a positive-definite inner product $\langle-,-\rangle$. Then we define a \emph{bundle of quaternion algebras} as the vector bundle $\ha = \mathbb{R}\oplus\beta$ together with a fibrewise multiplication given by $$(s,u)\cdot(t,v) = (st-\langle u,v\rangle, sv+tu+u\times v).$$
Equivalently, if $\mathcal{P}\to X$ is the principal $\so(3)=\mathrm{Aut}(\mathbb{H})$-bundle corresponding to $\beta$, then we have $\ha=\mathcal{P}\times_{\mathrm{Aut}(\mathbb{H})} \mathbb{H}$. 
\end{defn}

The definition says that fibrewise the bundle $\ha$ carries a structure of the algebra of quaternions, but globally it may not be the product bundle $X\times\mathbb{H}$.

\begin{defn}
A real vector bundle $V\to X$ is said to be a \emph{right $\ha$-bundle} if it admits a right $\ha$-module structure, i.e. there is a bundle map $V\otimes_{\mathbb{R}}\ha \to V$ that restricts to an $\mathbb{H}$-module structure in each fibre.  
\end{defn}

It follows from the definition that the dimension of an $\ha$-bundle must be divisible by four. Moreover, such a bundle can be canonically oriented. Indeed, to orient a fibre $V_{x}$, choose a basis $e_1, e_2, \ldots, e_{m}$ of $V_{x}$ as an $(\ha)_{x}$-module and an oriented orthonormal basis $\mathrm{i, j, k}$ of $\beta_{x}$. Then $e_1, e_{1}\mathrm{i}, e_{1}\mathrm{j}, e_{1}\mathrm{k}, \ldots, e_{m}, e_{m}\mathrm{i}, e_{m}\mathrm{j}, e_{m}\mathrm{k}$ is the oriented basis of $V_{x}$.

\begin{prop}[\cite{QS}]\label{qstr}
A $4m$-dimensional real vector bundle $V$ is a right $\ha$-bundle for some oriented 3-dimensional vector bundle $\beta$ if and only if the structure group of the frame bundle of $V$ may be reduced to the subgroup $\mathrm{Sp}(1)\mathrm{Sp}(m)\subset \mathrm{GL}(4m,\mathbb{R})$.
\end{prop}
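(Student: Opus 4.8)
The plan is to realise both conditions as two faces of a single fibrewise linear-algebraic structure on $V$, the bridge being an intrinsic description of the group $G=\Sp(1)\Sp(m)$. Regard the model fibre $\mathbb{R}^{4m}=\mathbb{H}^{m}$ as a right $\mathbb{H}$-module with the standard inner product $\langle v,w\rangle=\mathrm{Re}\sum_{i}\bar v_{i}w_{i}$. The observation I would establish first is that $G$ coincides with the group of all $\mathbb{R}$-linear isometries $T\colon\mathbb{H}^{m}\to\mathbb{H}^{m}$ that are semilinear over some algebra automorphism of $\mathbb{H}$, i.e. for which there is $\alpha\in\mathrm{Aut}(\mathbb{H})$ with $T(vq)=T(v)\,\alpha(q)$ for all $v,q$. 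One inclusion is the direct computation that $v\mapsto Av\bar a$ with $A\in\Sp(m)$, $a\in\Sp(1)$ is an isometry satisfying the identity with $\alpha=\mathrm{conj}_{a}$; conversely, since every automorphism of $\mathbb{H}$ is inner, writing $\alpha=\mathrm{conj}_{a}$ and setting $Av:=T(v)a$ produces a right-$\mathbb{H}$-linear isometry, hence an element of $\Sp(m)$, and recovers $T(v)=Av\bar a$. Under the resulting homomorphism $G\to\mathrm{Aut}(\mathbb{H})$, $[a,A]\mapsto\mathrm{conj}_{a}$, the induced action on $\mathbb{H}$ fixes $\mathbb{R}\cdot 1$ and rotates $\mathrm{Im}\,\mathbb{H}$ by $\so(3)$.

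For the implication from a reduction to an $\ha$-structure, suppose the frame bundle of $V$ reduces to a principal $G$-bundle $\mathcal{R}$. Setting $\beta=\mathcal{R}\times_{G}\mathrm{Im}\,\mathbb{H}$, an oriented Euclidean $3$-plane bundle since $G$ acts through $\so(3)$, gives $\ha=\mathcal{R}\times_{G}\mathbb{H}$ and $V=\mathcal{R}\times_{G}\mathbb{H}^{m}$. The fibrewise right multiplication $\mathbb{H}^{m}\times\mathbb{H}\to\mathbb{H}^{m}$ is $G$-equivariant precisely because of the semilinearity identity above, since $(Av_{0}\bar a)(aq_{0}\bar a)=A(v_{0}q_{0})\bar a$, so it descends to a bundle map $V\otimes_{\mathbb{R}}\ha\to V$ restricting to the $\mathbb{H}$-module structure in each fibre. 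The well-definedness of $[r,v_{0}]\cdot[r,q_{0}]=[r,v_{0}q_{0}]$ on overlaps is the routine equivariance check, and this exhibits $V$ as a right $\ha$-bundle.

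For the converse I would manufacture the reduction as a bundle of compatible frames. First reduce the metric: starting from an arbitrary metric on $V$ and averaging over the fibrewise action of the unit-quaternion sphere $S^{3}\subset\ha$, I obtain a metric $g$ invariant under right multiplication by unit sections of $\ha$. Choosing locally an identification $(\ha)_{x}\cong\mathbb{H}$ turns each fibre $V_{x}$ into a right $\mathbb{H}$-module carrying an $\Sp(1)$-invariant inner product; such a $g$ is the real part of a quaternionic-Hermitian form, so quaternionic Gram--Schmidt yields an $\mathbb{H}$-orthonormal basis and hence an isometric $\mathbb{H}$-isomorphism $\mathbb{H}^{m}\xrightarrow{\sim}V_{x}$. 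I then let $\mathcal{R}_{x}$ be the set of $\mathbb{R}$-linear isometries $\phi\colon\mathbb{H}^{m}\to V_{x}$ for which there exists an algebra isomorphism $\psi\colon\mathbb{H}\to(\ha)_{x}$ with $\phi(vq)=\phi(v)\cdot\psi(q)$. The construction above shows $\mathcal{R}_{x}\neq\varnothing$, and the intrinsic description of $G$ shows that $G$ acts on $\mathcal{R}_{x}$ by precomposition freely and transitively: given $\phi,\phi'$ the composite $\phi^{-1}\phi'$ is an isometry semilinear over $\psi^{-1}\psi'\in\mathrm{Aut}(\mathbb{H})$, hence lies in $G$. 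Thus $\mathcal{R}=\bigcup_{x}\mathcal{R}_{x}$ is a principal $G$-subbundle of the frame bundle of $V$, i.e. the desired reduction.

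The main obstacle is the fibrewise linear algebra underlying the converse: showing that an $\Sp(1)$-invariant real inner product on a right $\mathbb{H}$-module is genuinely the real part of a quaternionic-Hermitian form and that quaternionic Gram--Schmidt goes through, where the noncommutativity of $\mathbb{H}$ must be handled with care. Alongside this sits the bookkeeping needed to make the local identifications $(\ha)_{x}\cong\mathbb{H}$ and the resulting frames vary continuously, so that $\mathcal{R}$ is genuinely locally trivial rather than merely a fibrewise $G$-torsor.
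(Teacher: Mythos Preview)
Your argument is correct. For the direction \emph{reduction $\Rightarrow$ $\ha$-structure} you do exactly what the paper does: form $\beta=\mathcal{R}\times_{G}\mathrm{Im}\,\mathbb{H}$ and observe that right multiplication $\mathbb{H}^{m}\times\mathbb{H}\to\mathbb{H}^{m}$ is $G$-equivariant, hence descends. Your preliminary characterisation of $G$ as the group of isometric $\mathbb{H}$-semilinear maps is not in the paper but is a clean way to package the equivariance check $(Av\bar a)(aq\bar a)=A(vq)\bar a$.

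For the converse the paper gives no argument at all and simply refers to \cite{QS}, so here you supply more than the paper does. Your strategy---average a metric over the fibrewise $S^{3}$-action, recover a quaternionic-Hermitian form from the $\Sp(1)$-invariant inner product, run quaternionic Gram--Schmidt to produce frames, and use the semilinear characterisation of $G$ to see that the set of such frames is a $G$-torsor---is the standard one and is sound. The two points you flag (that an $\Sp(1)$-invariant real inner product on a right $\mathbb{H}$-module is $\mathrm{Re}\,h$ for a unique Hermitian $h$, via $h(v,w)=g(v,w)-g(v\mathrm{i},w)\mathrm{i}-g(v\mathrm{j},w)\mathrm{j}-g(v\mathrm{k},w)\mathrm{k}$; and local triviality of $\mathcal{R}$, obtained from a local oriented orthonormal frame of $\beta$ together with a local $\ha$-basis of $V$) are genuine but routine verifications, not gaps.
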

\begin{proof}
If the structure group of the frame bundle reduces to the subgroup $G=\Sp(1)\Sp(m)$, then there is a principal $G$-bundle $\mathcal{P}$ such that $V\cong \mathcal{P}\times_{G}\mathbb{H}^{m}$, where we view $\mathbb{H}^{m}$ as a real vector space. Now put $\beta=\mathcal{P}\times_{G}\mathrm{im}\,\mathbb{H}$ with the action of $G$ on $\mathrm{im}\,\mathbb{H}$ defined as follows: if $(a,A)\in \Sp(1)\times\Sp(m)$ represents an element of $G$, then $(a,A)\cdot q =aq\bar{a}$. Then $\beta$ is an orientable $3$-dimensional real vector bundle and the associated quaternion algebra is $\ha = \mathcal{P}\times_{G} \mathbb{H}$, where the action of $G$ on $\mathbb{H}$ is the same as on $\mathrm{im}\,\mathbb{H}$. But then right multiplication by quaternions is a $G$-equivariant map and so it induces a right $\ha$-module structure on $V$.

For the other direction see \cite{QS}. 
\end{proof}

The proposition applies, in particular, to the tangent or cotangent bundle of a quaternionic manifold $M$ (after introducing a Riemannian metric). We can actually describe the bundle $\beta$ as follows. Let $\mathbb{E}$ be the standard complex $\mathrm{Sp}(1)$-module as in Section \ref{1sec}. If we view the second symmetric power $S^{2}\mathbb{E}$ as a $G$-module, then the mapping $\varphi\colon\mathrm{im}\,\mathbb{H} \to S^{2}\mathbb{E}$ defined by $\varphi(u)= \mathrm{j}\otimes u - 1\otimes u\mathrm{j}$ is a real linear $G$-equivariant map. Moreover, the real basis $\mathrm{i}, \mathrm{j}, \mathrm{k}$ of $\mathrm{im}\,\mathbb{H}$ is mapped to a complex basis of $S^{2}\mathbb{E}$\begin{equation*} \mathrm{i}\mapsto (1\otimes \mathrm{j} + \mathrm{j}\otimes 1)\mathrm{i}, \quad \mathrm{j}\mapsto 1\otimes 1 +\mathrm{j}\otimes \mathrm{j}, \quad \mathrm{k}\mapsto (1\otimes 1- \mathrm{j}\otimes \mathrm{j})\mathrm{i}.\end{equation*} This implies that the complexification of $\mathrm{im}\,\mathbb{H}$ is isomorphic to $S^{2}\mathbb{E}$ and, on the level of associated vector bundles, the complexification of $\beta = \mathcal{P}\times_{G}\mathrm{im}\,\mathbb{H}$ is isomorphic to $S^{2}E = \mathcal{P}\times_{G}S^{2}\mathbb{E}$, which is a globally defined vector bundle over $M$. In fact, the sphere bundle of $\beta$ is precisely the Salamon's twistor space.\footnote{Compare with \cite[page 146]{SalK}.}

Now we proceed to define characteristic classes for $\ha$-bundles. Let $V\to X$ be a right $\ha$-bundle of quaternionic dimension $m$, i.e. real dimension $4m$. Then one can consider the associated projective bundle $\pi\colon \mathbb{H}_{\beta}\mathrm{P}(V)\to X$ whose fibre over a point $x\in X$ is the space of all quaternionic lines in the fibre $V_{x}$ in the sense of the $\ha$-module structure. Futhermore, let $L=\{(\ell,v) \in \mathbb{H}_{\beta}\mathrm{P}(V)\times V \,|\, v\in\ell\}$ be the canonical $\ha$-line bundle over $\mathbb{H}_{\beta}\mathrm{P}(V)$ oriented as a right $\pi^{*}\ha$-bundle. The following proposition defines characteristic classes $d^{\beta}_{j}(V)$ of the bundle $V$ as coefficients of a certain polynomial over the ring $H^{*}(X;\mathbb{Z})$.

\begin{prop}[\cite{QS}]
For each right $\ha$-bundle $V\to X$ of quaternionic dimension $m$ there are uniquely determined classes $d^{\beta}_{j}(V)\in H^{4j}(X;\mathbb{Z}), 1\leq j\leq m$, such that
$$H^{*}(\mathbb{H}_{\beta}\mathrm{P}(V);\mathbb{Z}) = H^{*}(X;\mathbb{Z})[t]/(t^{m}-d^{\beta}_1(V)t^{m-1} + \ldots + (-1)^{m}d^{\beta}_{m}(V)),$$ where $t=e(L)\in H^{4}(\mathbb{H}_{\beta}\mathrm{P}(V);\mathbb{Z})$ is the Euler class of the canonical bundle $L$.
\end{prop}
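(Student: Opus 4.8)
\section*{Proof proposal}

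The plan is to recognise $\pi\colon\ha\mathrm{P}(V)\to X$ as a locally trivial fibre bundle with fibre a quaternionic projective space and to apply the Leray--Hirsch theorem, reading off the classes $d^{\beta}_{j}(V)$ from the resulting module structure. First I would identify the fibre: over a point $x\in X$ the fibre $\pi^{-1}(x)$ is the space of quaternionic lines in $V_{x}$ with respect to the quaternion algebra $(\ha)_{x}\cong\mathbb{H}$, i.e.\ a quaternionic projective space $\mathbb{H}\mathrm{P}^{m-1}$. Since $V$ is locally trivial as a right $\ha$-bundle---over any neighbourhood on which $\beta$, and hence $\ha$, trivialises we have $V\cong\mathbb{H}^{m}$ compatibly with the module structure---the projection $\pi$ is a locally trivial fibre bundle with fibre $\mathbb{H}\mathrm{P}^{m-1}$, whose integral cohomology is the free abelian group $H^{*}(\mathbb{H}\mathrm{P}^{m-1};\mathbb{Z})=\mathbb{Z}[u]/(u^{m})$ with $u$ a generator of $H^{4}$.

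Next I would verify the hypothesis of Leray--Hirsch, namely that there exist global classes on the total space restricting to an additive basis of each fibre. The natural candidates are the powers of $t=e(L)$. The bundle $L$ is oriented as a right $\pi^{*}\ha$-bundle via the canonical orientation of an $\ha$-module described above, so $t\in H^{4}(\ha\mathrm{P}(V);\mathbb{Z})$ is a genuine, globally defined integral Euler class. Restricted to a fibre, $L$ becomes the tautological quaternionic line bundle over $\mathbb{H}\mathrm{P}^{m-1}$, whose Euler class is $\pm u$, a generator of $H^{4}(\mathbb{H}\mathrm{P}^{m-1};\mathbb{Z})$. Hence $1,t,t^{2},\ldots,t^{m-1}$ restrict to the basis $1,\pm u,u^{2},\ldots,(\pm u)^{m-1}$ of $H^{*}(\mathbb{H}\mathrm{P}^{m-1};\mathbb{Z})$. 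As the fibre cohomology is free of finite rank, Leray--Hirsch applies over $\mathbb{Z}$ and yields that $H^{*}(\ha\mathrm{P}(V);\mathbb{Z})$ is a free $H^{*}(X;\mathbb{Z})$-module with basis $1,t,\ldots,t^{m-1}$.

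Given this, the class $t^{m}\in H^{4m}$ must be an $H^{*}(X;\mathbb{Z})$-linear combination of the basis elements, and counting degrees forces the coefficient of $t^{m-j}$ to lie in $H^{4j}(X;\mathbb{Z})$. Writing this combination as $t^{m}=d^{\beta}_{1}(V)t^{m-1}-d^{\beta}_{2}(V)t^{m-2}+\cdots+(-1)^{m-1}d^{\beta}_{m}(V)$ defines the classes $d^{\beta}_{j}(V)\in H^{4j}(X;\mathbb{Z})$, and their uniqueness is immediate from the freeness of the basis. Finally, since $H^{*}(\ha\mathrm{P}(V);\mathbb{Z})$ is generated as an $H^{*}(X;\mathbb{Z})$-algebra by $t$ subject to exactly this one relation, the ring isomorphism of the statement follows.

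The step I expect to be the main obstacle is the verification that the tautological bundle $L$ carries a well-defined integral Euler class restricting to a generator on each fibre with a coherent sign. This requires care with the canonical orientation induced by the $\ha$-module structure---rather than by a complex structure, which is unavailable because $\beta$ is only a real $3$-plane bundle---and with the fact that the quaternion algebra $(\ha)_{x}$ varies over $X$. Once the restriction-to-a-generator claim is secured, identifying the fibre, invoking Leray--Hirsch, and extracting the $d^{\beta}_{j}(V)$ are all formal.
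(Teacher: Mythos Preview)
Your proposal is correct and follows exactly the route the paper indicates: the paper's own proof consists of the single sentence ``The proof is a standard application of the Leray--Hirsch theorem, see \cite{QS},'' and your outline supplies precisely that standard argument, including the only genuinely non-formal point (the canonical $\ha$-orientation of $L$ giving a well-defined integral Euler class restricting to a fibre generator).
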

\begin{proof}
The proof is a standard application of the Leray-Hirsch theorem, see \cite{QS}. 
\end{proof}

One can verify (see \cite{QS}) that the classes $d^{\beta}_{j}(V)$ have usual properties of characteristic classes like naturality or multiplicativity. Moreover, there is a splitting principle which implies that in calculations with the classes $d^{\beta}_{j}(V)$ we may formally assume that there are $y_1, y_2, \ldots, y_{m}\in H^{4}(X;\mathbb{Z})$ such that $d^{\beta}_{j}(V)$ is the $j$-th elementary symmetric polynomial in the $y_{k}$'s or, in short, $d^{\beta}(V) = 1 + d^{\beta}_{1}(V) + \ldots + d^{\beta}_{m}(V) = \prod_{k=1}^{m}(1+y_{k})$.

The following proposition shows that the characteristic classes just defined determine other characteristic classes of $V$ as a real vector bundle. This is technically very useful.

\begin{prop}[\cite{QS}]\label{classes}
Let $V\to X$ be a canonically oriented right $\ha$-bundle of quaternionic dimension $m$.
\begin{enumerate}
\item[(a)] The Euler class $e(V)$ of $V$ equals the top-dimensional class $d^{\beta}_{m}(V)$.
\item[(b)] The rational Pontryagin classes $p_{j}(V)\in H^{4j}(X;\mathbb{Q})$ of $V$ are given by $$1+p_1(V)+p_2(V)+\ldots+p_{2m}(V) = \prod_{j=1}^{m}((1+y_{j})^2+p_1(\beta)),$$ where $d^{\beta}(V)= \prod_{k=1}^{m}(1+y_{k})$ and $p_{1}(\beta)$ is the first Pontryagin class of $\beta$.
\end{enumerate}
\end{prop}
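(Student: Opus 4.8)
The plan is to prove both statements by the splitting principle together with the Borel--Hirzebruch weight calculus of Remark~\ref{borhirz}. After reducing the structure group to the compact form $\Sp(1)\Sp(m)$, the bundle $V$ is associated to the $\Sp(1)\Sp(m)$-module $\mathbb{H}^{m}$ and its complexification is the associated bundle of $\mathbb{E}\otimes\mathbb{F}$, as in \eqref{EF}. Choosing a maximal torus with weight-class $\xi\in H^{2}$ for the $\Sp(1)$-factor and $\eta_{1},\dots,\eta_{m}\in H^{2}$ for the $\Sp(m)$-factor, the Chern roots of $V\otimes\mathbb{C}\cong\mathbb{E}\otimes\mathbb{F}$ are the $4m$ classes $\pm\xi\pm\eta_{k}$. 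Since $\beta\otimes\mathbb{C}\cong S^{2}\mathbb{E}$ has Chern roots $2\xi,0,-2\xi$, one reads off $p_{1}(\beta)=4\xi^{2}$. By the splitting principle stated above, $V$ pulls back to a direct sum $L_{1}\oplus\cdots\oplus L_{m}$ of $\ha$-line bundles sharing the same $\beta$, with $d^{\beta}(V)=\prod_{k}(1+y_{k})$; and for a single $\ha$-line bundle $\ha\mathrm{P}(L_{k})=X$, so $L_{k}$ is its own canonical bundle and the defining relation forces $y_{k}=d^{\beta}_{1}(L_{k})=e(L_{k})=\eta_{k}^{2}-\xi^{2}$.

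For part (a) I would use multiplicativity of the Euler class under direct sums: $e(V)=\prod_{k}e(L_{k})=\prod_{k}y_{k}$, which is exactly the top elementary symmetric polynomial $d^{\beta}_{m}(V)$. Injectivity of the splitting pullback on cohomology then transports this identity, an equation between two classes already defined on $X$, back to $X$.

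For part (b) I would use that the total rational Pontryagin class is multiplicative modulo $2$-torsion, hence exactly in $H^{**}(-;\mathbb{Q})$, so $p(V)=\prod_{k}p(L_{k})$ and it suffices to treat a single $\ha$-line bundle $L$ with $y=e(L)$. Writing $p(L)=\prod(1+\theta^{2})$ over the positive Chern roots $\theta$ of $L\otimes\mathbb{C}$, this reads $p(L)=(1+(\xi+\eta)^{2})(1+(\xi-\eta)^{2})$, and the claim reduces to the algebraic identity $(1+(\xi+\eta)^{2})(1+(\xi-\eta)^{2})=(1+\eta^{2}-\xi^{2})^{2}+4\xi^{2}$, whose two sides both expand to $1+2(\xi^{2}+\eta^{2})+(\xi^{2}-\eta^{2})^{2}$. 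Taking the product over $k$ gives $p(V)=\prod_{k}((1+y_{k})^{2}+p_{1}(\beta))$, and injectivity of the pullback yields the statement over $X$.

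The main obstacle is the bookkeeping of orientations and signs rather than any deep difficulty. In particular I must verify that the canonical orientation of the $\ha$-line bundle produces $e(L)=\eta^{2}-\xi^{2}$ and not its negative, since the opposite sign would spoil the coefficient of the degree-$4$ term in (b); equivalently, I must confirm that the topologically defined classes $e$, $d^{\beta}_{1}$ and $p_{1}(\beta)$ correspond under Borel--Hirzebruch to exactly the weight expressions used above. The relation $p_{2}(L)=e(L)^{2}$ for oriented rank-$4$ bundles serves as a consistency check on the top-degree term, and the compatibility of (a) with the normalization $y_{k}=e(L_{k})$ pins down the remaining sign.
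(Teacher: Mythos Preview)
The paper itself does not prove this proposition: its proof consists solely of the citation ``See \cite{QS}'' together with a remark that right rather than left $\ha$-bundles are in use. There is therefore no in-paper argument to compare your proposal against; you are supplying what the paper omits.

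Your argument is sound. The reduction to $\ha$-line bundles via the splitting principle, the identification of the Chern roots of $L_{k}\otimes\mathbb{C}$ as $\pm\xi\pm\eta_{k}$, the relation $p_{1}(\beta)=4\xi^{2}$, and the algebraic identity
\[
(1+(\xi+\eta)^{2})(1+(\xi-\eta)^{2})=(1+\eta^{2}-\xi^{2})^{2}+4\xi^{2}
\]
are all correct; the observation $d^{\beta}_{1}(L)=e(L)$ for an $\ha$-line bundle follows at once from the defining relation with $m=1$. The one point you rightly flag as unfinished is the sign $e(L_{k})=\eta_{k}^{2}-\xi^{2}$: the opposite choice $y_{k}=\xi^{2}-\eta_{k}^{2}$ would yield $(1+y_{k})^{2}+p_{1}(\beta)=1+6\xi^{2}-2\eta_{k}^{2}+(\xi^{2}-\eta_{k}^{2})^{2}$, which does \emph{not} equal $p(L_{k})$, so the sign genuinely matters and must be read off from the canonical orientation defined just before Proposition~\ref{qstr}. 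Your check via $p_{2}(L)=e(L)^{2}$ only fixes $e(L)$ up to sign and cannot resolve this on its own; a clean way is to specialise to trivial $\beta$ (so $\xi=0$) and compare with the ordinary quaternionic orientation, which forces $e(L_{k})=\eta_{k}^{2}$ and hence the sign $\eta_{k}^{2}-\xi^{2}$ in general.
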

\begin{proof}
See \cite{QS}, but note that we deal with right $\ha$-bundles rather than the left ones.
\end{proof}

Finally, the characteristic classes $d_{j}^{\beta}$ may be used to decribe the cohomology ring of the classifying space of the group $G=\Sp(1)\Sp(m)$. Let $EG\to BG$ be the universal principal $G$-bundle and put $\beta=EG\times_{G}\mathrm{im}\,\mathbb{H}$ and $\widetilde{V}=EG\times_{G}\mathbb{H}^{m}$. Then $\widetilde{V}$ is a right $\mathbb{H}_{\beta}$-bundle as in the proof of Proposition \ref{qstr}. Let us write simply $d_{j}$ for the characteristic classes $d^{\beta}_{j}(\widetilde{V})$ and $q_{1}$ for the first Pontryagin class $p_{1}(\beta)$ of $\beta$. 

\begin{prop}[\cite{QS}]
The rational cohomology ring of $B\mathrm{Sp}(1)\mathrm{Sp}(m)$ is given by
$$H^{*}(B\mathrm{Sp}(1)\mathrm{Sp}(m);\mathbb{Q}) \cong \mathbb{Q}[q_1, d_1, d_2, \ldots, d_{m}].$$
\end{prop}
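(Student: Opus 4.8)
The plan is to reduce the computation to the classifying space of the double cover $\Sp(1)\times\Sp(m)$ of $G=\Sp(1)\Sp(m)$, and then to exhibit $q_{1},d_{1},\dots,d_{m}$ as an alternative system of polynomial generators by evaluating them on weights. First I would use the central extension $1\to\mathbb{Z}_{2}\to\Sp(1)\times\Sp(m)\to G\to1$, which induces a fibration $B\mathbb{Z}_{2}\to B(\Sp(1)\times\Sp(m))\to BG$. Since $H^{k}(B\mathbb{Z}_{2};\mathbb{Q})=0$ for $k>0$, the rational Serre spectral sequence collapses and the projection induces an isomorphism
$$H^{*}(BG;\mathbb{Q})\cong H^{*}(B\Sp(1);\mathbb{Q})\otimes H^{*}(B\Sp(m);\mathbb{Q}).$$
By Borel \cite{Bor} the right-hand side is the polynomial ring $\mathbb{Q}[x_{0}^{2},\sigma_{1},\dots,\sigma_{m}]$, where $\pm x_{0}$ are the weights of $\mathbb{E}$, the $\pm x_{j}$ are the weights of $\mathbb{F}$, and $\sigma_{j}=e_{j}(x_{1}^{2},\dots,x_{m}^{2})$ are the symplectic Pontryagin classes; thus it is a free polynomial algebra on generators in degrees $4;\,4,8,\dots,4m$. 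Since this algebra is free, a family of homogeneous elements is a set of polynomial generators precisely when their images form a basis of the module of indecomposables $Q=H^{>0}/(H^{>0})^{2}$, and the degrees of $q_{1},d_{1},\dots,d_{m}$ match. It therefore suffices to prove that their classes span $Q$.

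The next step is to compute these classes in terms of the weights, which is legitimate because the isomorphism above is realized by the injective restriction to the cover. Since $\beta\otimes\mathbb{C}\cong S^{2}E$ has Chern roots $2x_{0},0,-2x_{0}$, we get $q_{1}=p_{1}(\beta)=-c_{2}(\beta\otimes\mathbb{C})=4x_{0}^{2}$. From \eqref{EF} the complexification $\widetilde{V}\otimes\mathbb{C}\cong\widetilde{E}\otimes\widetilde{F}$ has Chern roots $\{\pm x_{0}\pm x_{j}:1\le j\le m\}$, whence
$$p(\widetilde{V})=\prod_{j=1}^{m}\bigl(1+(x_{0}+x_{j})^{2}\bigr)\bigl(1+(x_{0}-x_{j})^{2}\bigr),\qquad p_{1}(\widetilde{V})=2m\,x_{0}^{2}+2\sigma_{1}.$$

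I would then identify the $d_{j}$ using Proposition \ref{classes}(b) and the splitting principle for $\ha$-bundles. Writing $d^{\beta}(\widetilde{V})=\prod_{k}(1+y_{k})$ and comparing degree-$4$ parts in $\prod_{j}((1+y_{j})^{2}+q_{1})=p(\widetilde{V})$ gives $p_{1}(\widetilde{V})=2d_{1}+m\,q_{1}$, hence $d_{1}=\sigma_{1}-m\,x_{0}^{2}$. To pin down the higher classes I would restrict to the block-diagonal subgroup $\Sp(1)\times\Sp(1)^{m}$ (equivalently work over $B(\Sp(1)\times\Sp(1)^{m})$), over which $\widetilde{V}$ splits into quaternionic line bundles $L_{k}$ with $y_{k}=e(L_{k})=x_{k}^{2}-x_{0}^{2}$ by the rank-one instance of the previous computation. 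As restriction to this subgroup is injective on rational cohomology, this forces $d_{j}=e_{j}(x_{1}^{2}-x_{0}^{2},\dots,x_{m}^{2}-x_{0}^{2})$ for all $j$. Reducing modulo decomposables, every term containing a factor $x_{0}^{2}$ of positive complementary degree drops out, so $d_{j}\equiv\sigma_{j}$ for $j\ge2$, while $d_{1}\equiv\sigma_{1}-m\,x_{0}^{2}$.

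Finally I would read off the images in $Q$. In degree $4$ the indecomposables are spanned by $x_{0}^{2}$ and $\sigma_{1}$, and the transition to $(q_{1},d_{1})=(4x_{0}^{2},\,\sigma_{1}-m\,x_{0}^{2})$ has determinant $4\neq0$, so these two span; in each degree $4j$ with $j\ge2$ the indecomposables are one-dimensional, spanned by $\sigma_{j}$, and $d_{j}\equiv\sigma_{j}$ is a generator. Hence $q_{1},d_{1},\dots,d_{m}$ map to a basis of $Q$, and the ring is $\mathbb{Q}[q_{1},d_{1},\dots,d_{m}]$. The main obstacle is the precise identification $y_{k}=x_{k}^{2}-x_{0}^{2}$ of the quaternionic Chern roots: this is where the $\ha$-splitting principle of Proposition \ref{classes} and the injectivity of restriction to the block-diagonal subgroup are essential, whereas only the degree-$4$ computation and the leading term of $d_{j}$ are actually needed for the conclusion.
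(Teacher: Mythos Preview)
Your argument is correct. The paper itself does not give a proof here: it simply remarks that the statement follows from the computation of the \emph{integral} cohomology ring of $B\Sp(1)\Sp(m)$ carried out in \cite{QS}. Your approach is different and self-contained at the rational level: you use the collapse of the rational Serre spectral sequence for the $B\mathbb{Z}_{2}$-fibration to identify $H^{*}(BG;\mathbb{Q})$ with the polynomial ring $\mathbb{Q}[x_{0}^{2},\sigma_{1},\dots,\sigma_{m}]$, and then perform a change of polynomial generators by checking on the module of indecomposables. The key identifications $q_{1}=4x_{0}^{2}$ and $d_{j}=e_{j}(x_{1}^{2}-x_{0}^{2},\dots,x_{m}^{2}-x_{0}^{2})$ are correctly derived from Proposition~\ref{classes}(b) together with the $\ha$-splitting over the block-diagonal subgroup $\Sp(1)\times\Sp(1)^{m}$, and the injectivity of restriction to this maximal-rank subgroup is standard. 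What you gain over the paper's citation is an explicit, elementary argument that never touches the (harder) integral computation; what the paper's route buys is of course the stronger integral statement, from which the rational one is immediate. One small remark: the step identifying $y_{k}=x_{k}^{2}-x_{0}^{2}$ uses that the pullback of $\widetilde{V}$ to $B(\Sp(1)\times\Sp(1)^{m})$ splits as a sum of $\mathbb{H}_{\beta}$-\emph{line} bundles (not merely complex ones), so that multiplicativity of $d^{\beta}$ applies; this is indeed the case since each coordinate $\mathbb{H}\subset\mathbb{H}^{m}$ is a quaternionic subrepresentation, but it is worth saying explicitly.
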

\begin{proof}
One can obtain this from the description of the integral cohomology ring of the classifying space $B\mathrm{Sp}(1)\mathrm{Sp}(m)$, which was done in \cite{QS}.
\end{proof}

According to Proposition \ref{classes} the universal Euler class $e(\widetilde{V})$ equals the class $d_{m}$, which is a generator of the cohomology ring and so it is nonzero. We may therefore apply Proposition \ref{formula} to compute the indices of the quaternionic complexes.


\section{The computations}

Having all the necessary background at hand, we finally describe an algorithm how to compute the indices of the elliptic complexes from Proposition \ref{quatcomplex}. This algorithm can be carried out for each given dimension of the manifold and for each given complex $D_{k}$.
 
Let $M$ be a compact $4m$-dimensional quaternionic manifold. By introducing a Riemannian metric on $M$ we may reduce the structure group of the principal frame bundle of $M$ to the subgroup $G=\mathrm{Sp}(1)\mathrm{Sp}(m)$. Then $M$ is canonically oriented, see the preceding section. Let $\mathcal{P}\to M$ be the principal $G$-bundle and $f\colon M \to BG$ its classifying map. Put $\widetilde{V}=EG\times_{G}\mathbb{R}^{4m}$ and $\widetilde{W}^{j}_{k} = EG\times_{G}\mathbb{W}^{j}_{k}$, where $W^{j}_{k}$ are the $G$-modules inducing the quaternionic complexes in question, see Proposition \ref{quatcomplex}. Then by the Atiyah-Singer formula from Proposition \ref{formula} the index of the quaternionic complex $D_{k}$ is given by \begin{equation}\label{final} \mathrm{ind}\,D_{k} = \left\{ f^{**}\left( \frac{\sum_{j=0}^{2m} (-1)^{j}\ch\widetilde{W}^{j}_{k}}{e(\widetilde{V})} \right) \cdot \td(TM\otimes\mathbb{C}) \right\}[M].\end{equation}

To evaluate this expression we first have to solve the equation \begin{equation}\label{eqn1} x\cup e(\widetilde{V}) = \sum_{j=0}^{2m} (-1)^{j}\ch \widetilde{W}^{j}_{k} \end{equation} in the cohomology ring $H^{**}(BG;\mathbb{Q})$. This task may be simplified in two ways. First, because the cohomology groups of the compact manifold $M$ vanish above dimension $4m$, it suffices to determine $x$ up to this dimension $4m$. Secondly, because the product group $G_{1}=\Sp(1)\times \Sp(m)$ is the double cover of $G$, the projection $\pi\colon G_1 \to G$ induces an isomorphism $(B\pi)^{**}\colon H^{**}(BG;\mathbb{Q}) \to H^{**}(BG_{1};\mathbb{Q})$. Therefore, we may pull back the above equation to $BG_{1}$ and solve it in $H^{**}(BG_{1};\mathbb{Q})$. The advantage will be clear soon.

Let $\mathbb{E}$ and $\mathbb{F}$ be the standard complex $\Sp(1)$ and $\Sp(m)$-modules, respectively, and put $\widetilde{E}=EG_{1}\times_{G_{1}}\mathbb{E}$ and $\widetilde{F}=EG_{1}\times_{G_{1}}\mathbb{F}$. Then these are globally defined vector bundles over $BG_{1}$ and we have \begin{equation} \label{split}(B\pi)^{*}(\widetilde{V}\otimes_{\mathbb{R}}\mathbb{C}) \cong \widetilde{E}\otimes_{\mathbb{C}}\widetilde{F}, \end{equation} compare with the isomorphism \eqref{EF}. Moreover, if $\beta=EG\times_{G}\mathrm{im}\,\mathbb{H}$, then the real vector bundle $\widetilde{V}_{1} = (B\pi)^{*}(\widetilde{V})$ is a right $\mathbb{H}_{\beta_1}$-bundle for $\beta_{1}=(B\pi)^{*}(\beta)$. As in the previous section, we may prove that $\beta_{1}\otimes\mathbb{C} \cong S^{2}\widetilde{E}$ and hence for the first Pontryagin class of $\beta_{1}$ we get \begin{equation} \label{pontr} p_{1}(\beta_{1}) = -c_{2}(S^{2}\widetilde{E}) = -4c_{2}(\widetilde{E}).\end{equation} Applying the Chern character on \eqref{split} and comparing inductively the two sides of the result, one may write the Pontryagin classes of $\widetilde{V}_{1}$ as polynomials in the Chern classes of $\widetilde{E}$ and $\widetilde{F}$. Altogether with \eqref{pontr} and Proposition \ref{classes} this implies that we are able to translate between three sets of characteristic classes -- the Chern classes of $\widetilde{E}$ and $\widetilde{F}$, the Pontryagin classes of $\widetilde{V}_{1}$ and $\beta_{1}$ and, finally, the classes $d^{\beta_{1}}_{1}(\widetilde{V}_{1}), d^{\beta_{1}}_{2}(\widetilde{V}_{1}), \ldots, d^{\beta_{1}}_{m}(\widetilde{V}_{1})$ and $p_{1}(\beta_{1})$.

Now we may return to the equation \eqref{eqn1} in the pulled-back version \begin{equation} \label{eqn2} (B\pi)^{*}(x)\cup e(\widetilde{V}_{1}) = \sum_{j=0}^{2m} (-1)^{j} \ch ((B\pi)^{*}(\widetilde{W}^{j}_{k})). \end{equation} We would like to compute the right-hand side in terms of the Chern classes of $\widetilde{E}$ and $\widetilde{F}$. Once we do this, it remains to express the result in terms of the $d^{\beta_{1}}_{l}$-classes and $p_{1}(\beta_{1})$ and divide by $e(\widetilde{V}_{1})=d^{\beta_{1}}_{m}(\widetilde{V}_{1})$ to obtain the solution $(B\pi)^{*}(x)\in H^{*}(BG_{1}; \mathbb{Q})$ and hence also $x\in H^{*}(BG;\mathbb{Q})$, i.e. the fraction in \eqref{final}.

Recall from the definition of the representations $\mathbb{W}^{j}_{k}$  (see \eqref{w}) that $$ (B\pi)^{*}(\widetilde{W}_{k}^{j}) =  S^{j+k}\widetilde{E}\otimes (\Lambda^{j}\widetilde{F}\otimes S^{k}\widetilde{F}^{*})_{0} \quad \text{for } j<2m, \quad (B\pi)^{*}(\widetilde{W}_{k}^{2m}) = S^{2(m+k)}\widetilde{E}\otimes \Lambda^{2m}\widetilde{F}.$$ The two factors in the tensor products are globally defined vector bundles and so we can compute its Chern characters separately. We will again use the approach described in Remark \ref{borhirz}. As a maximal torus $S$ of $G_{1}=\Sp(1)\times \Sp(m)$ take the direct product of the standard maximal tori of $\Sp(1)$ and $\Sp(m)$ -- the standard maximal torus of $\Sp(1)$ is the set of complex units $\exp(2\pi\mathrm{i}x)$ while the standard maximal torus of $\Sp(m)$ is the set of diagonal matrices with entries $\exp(2\pi\mathrm{i}x_{l})$, where $x_{l}\in\mathbb{R}$.

Consider first the vector bundle $\widetilde{E}=EG_{1}\times_{G_{1}} \mathbb{E}$. The weights of the corresponding $G_{1}$-module $\mathbb{E}$ are $\pm x$ viewed as linear forms on the Lie algebra $\mathfrak{s}$. Then the total Chern class of $\widetilde{E}$ is given by $c(\widetilde{E}) = (1+y)(1-y)$ and $$c_{1}(\widetilde{E}) = 0,\quad c_{2}(\widetilde{E})= -y^{2}.$$

The Chern classes of the symmetric powers $S^{j}\widetilde{E}$ are now easy to compute. Clearly, the weights of the $G_{1}$-module $S^{j}\mathbb{E}$ are the forms $(k_{1}-k_{2})x$, $k_1+k_2=j$, and so we have $$c(S^{j}\widetilde{E}) = \prod_{k_1+k_2=j}(1+(k_1-k_2)y).$$ This is clearly a polynomial expression in $-y^{2}=c_{2}(\widetilde{E})$.

Now turn to the vector bundle $\widetilde{F}=EG_{1}\times_{G_{1}}\mathbb{F}$. The weights of the $G_{1}$-module $\mathbb{F}$ are precisely $\pm x_{l}$, $1\leq l \leq m$, viewed as linear forms on the Lie algebra $\mathfrak{s}$. The total Chern class of $\widetilde{F}$ is then given by $$c(\widetilde{F}) = \prod_{l=1}^{m}(1+y_{l})(1-y_{l})=\prod_{l=1}^{m}(1-y_{l}^{2})$$ and so $c_{2j}(\widetilde{F})$ is the $j$-th elementary symmetric polynomial in the $-y_{l}^{2}$ while $c_{2j+1}(\widetilde{F})=0$.

The computation of the Chern classes of $(\Lambda^{j}\widetilde{F}\otimes S^{k}\widetilde{F}^{*})_{0}$, $k\geq 0$, is a bit more complicated. This is because $\mathbb{V}^{j}_{k}=(\Lambda^{j}\mathbb{F}\otimes S^{k}\mathbb{F}^{*})_{0}$ was defined as a representation of the group $\mathrm{U}(2m)$ corresponding to some maximal weight and we have to find its weights with respect to the subgroup $\Sp(m)\subset \mathrm{U}(2m)$. This can be achieved as follows. First, the character ring of complex representations of the group $\mathrm{SU}(2m)$ differs from that of the group $\mathrm{U}(2m)$ only by a one-dimensional determinantal representation on which $\Sp(m)$ acts trivially. Therefore, there is nothing lost in assuming that $\mathbb{V}^{j}_{k}$ is a representation of $\mathrm{SU}(2m)$. But $\mathrm{SU}(2m)$ is compact and simply connected and so its representation theory is equivalent to that of the complex Lie algebra $\mathfrak{sl}(2m, \mathbb{C})$. In particular, if we know the maximal weight of $\mathbb{V}^{j}_{k}$, the remaining weights can be computed by standard algorithms, see for example \cite{Sam}. The maximal weight of $\mathbb{V}^{j}_{k}$ is by definition the sum of the maximal weight of $\Lambda^{j}\mathbb{F}$ and the maximal weight of $S^{k}\mathbb{F}^{*}$ and these are easy to find -- if $z_{1}, z_{2}, \ldots, z_{2m}$ are the weights of $\mathbb{F}$ viewed as the standard $\mathrm{SU}(2m)$-module, then the maximal weight of $\Lambda^{j}\mathbb{F}$ is $z_{1}+z_{2}+\ldots+z_{j}$ while the maximal weight of $S^{k}\mathbb{F}^{*}$ is $-k\cdot z_{2m}$. The remaining weights of $\mathbb{V}^{j}_{k}$ are  integral linear combinations of the $z_{l}$'s as well. To obtain the weights of $\mathbb{V}^{j}_{k}$ as a $\Sp(m)$-module we only have to substitute $z_{2l}=x_{l}$ and $z_{2l+1}=-x_{l}$ for $1 \leq l \leq m$ -- this can be seen from the definition of the standard inclusion $\Sp(m)\subset \mathrm{SU}(2m)$. Finally, once we know the weights, we get the total Chern class $c(\mathbb{V}^{j}_{k})$ and this will be a polynomial expression symmetric in the variables $-y_{l}^{2}$. Indeed, the set of weights of a $\mathrm{SU}(2m)$-module is invariant under the action of the Weyl group of $\mathrm{SU}(2m)$, which is the symmetry group on the set $\{z_1, z_2, \ldots, z_{2m} \}$. Therefore, $c(\mathbb{V}^{j}_{k})$ can be expressed in terms of the Chern classes of $\widetilde{F}$.

To sum up, we have seen that the Chern classes of both the vector bundles $S^{j+k}\widetilde{E}$ and $(\Lambda^{j}\widetilde{F}\otimes S^{k}\widetilde{F}^{*})_{0}$ may be written in terms of the Chern classes of $\widetilde{E}$ and $\widetilde{F}$ and so the same holds true for the Chern character of $(B\pi)^{*}(\widetilde{W}^{j}_{k})$. The right-hand side of the equation \eqref{eqn2} is thus a polynomial in the Chern classes of $\widetilde{E}$ and $\widetilde{F}$ and so it can be expressed in terms of the classes $d^{\beta_{1}}_{l}(\widetilde{V}_{1})$ and $p_{1}(\beta_{1})$. Next, the result will be a multiple of the Euler class $e(\widetilde{V}_{1}) =
d^{\beta_{1}}_{m}(\widetilde{V}_{1})$ and by dividing we obtain $(B\pi)^{*}(x)\in H^{*}(BG_{1}; \mathbb{Q})$. To get the solution $x\in H^{*}(BG; \mathbb{Q})$ of \eqref{eqn1} it suffices to write $d^{\beta}_{l}(\widetilde{V})$ and $p_{1}(\beta)$ instead of $d^{\beta_{1}}_{l}(\widetilde{V}_{1})$ and $p_{1}(\beta_{1})$.

We are now at the end of the algorithm. The solution $x$, which is the fraction in \eqref{final}, may be expressed in terms of the Pontryagin classes of $\widetilde{V}$ and $p_{1}(\beta)$ so that $f^{**}(x)$ will be a polynomial in the Pontryagin classes of $TM$ and the class $p_{1}(f^{*}\beta)$. By multiplying with the Todd class $\td(TM\otimes\mathbb{C})$ and evaluating the top-dimensional part of the product on the fundamental class $[M]$ we get the desired index.

Index formulas obtained in this way depend on the $G$-structure of $M$ via the characteristic class $p_{1}(f^{*}\beta)$. This class may be expressed without any reference to the classifying map $f$. Indeed, from the isomorphism $\beta\otimes \mathbb{C}\cong S^{2}\widetilde{E}$, where $S^{2}\widetilde{E}$ is now viewed as a vector bundle over $BG$, follows that $f^{*}(\beta\otimes\mathbb{C}) \cong S^{2}E$ and this is a globally defined complex vector bundle over $M$. Hence $$p_{1}(f^{*}\beta) = -c_{2}(f^{*}(\beta\otimes\mathbb{C})) = -c_{2}(S^{2}E).$$

In general, the most difficult computational problem is to find the weights of the $G$-modules $(\Lambda^{j}\mathbb{F}\otimes S^{k}\mathbb{F}^{*})_{0}$ and then process these to obtain the Chern classes of the vector bundles $(\Lambda^{j}\widetilde{F}\otimes S^{k}\widetilde{F}^{*})_{0}$. Of course, one can make use of computer algebra systems such as LiE (see \cite{Lie}) and Maple. We have carried out some calculations for 8 and 12-dimensional manifolds arriving at the following formulas.

\begin{thm}\label{eight}
Let $M$ be an $8$-dimensional compact quaternionic manifold. If we denote $p_{1}=p_{1}(TM)$, $p_{2}=p_{2}(TM)$ and  $q_{1}=-c_{2}(S^{2}E)$, then we have \begin{align*} \textup{ind}\,D_{0} = \left(\frac{7}{1920}p_{1}^{2} -\frac{1}{24}p_{1}q_{1} -\frac{1}{480}p_{2} +\frac{1}{12}q_{1}^{2} \right)[M], \\  \textup{ind}\,D_{1} = \left(\frac{209}{1920}p_{1}^{2} +\frac{11}{24}p_{1}q_{1} -\frac{167}{480}p_{2} +\frac{25}{12}q_{1}^{2} \right)[M].\end{align*} 
\end{thm}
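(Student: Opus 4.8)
The plan is to run the algorithm of Section 5 with $m=2$ and $k\in\{0,1\}$, carrying out the representation-theoretic bookkeeping explicitly. All computations take place on $BG_1=B(\Sp(1)\times\Sp(2))$, where the bundles $\widetilde E$ and $\widetilde F$ are globally defined. By Remark \ref{borhirz} their Chern roots are $\pm y$ and $\pm y_1,\pm y_2$, so that $c(\widetilde E)=1-y^2$, $c(\widetilde F)=(1-y_1^2)(1-y_2^2)$, and $q_1=p_1(\beta_1)=-4c_2(\widetilde E)=4y^2$ by \eqref{pontr}.

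First I would list the eight relevant $G_1$-modules from \eqref{w}. For $D_0$ one has $\mathbb{W}_0^j=S^j\mathbb{E}\otimes\Lambda^j\mathbb{F}$ for $0\le j\le 4$, the top term $j=2m=4$ already conforming to this pattern. For $D_1$ one has $\mathbb{W}_1^j=S^{j+1}\mathbb{E}\otimes(\Lambda^j\mathbb{F}\otimes\mathbb{F}^*)_0$ for $0\le j\le 3$, together with the exceptional top term $\mathbb{W}_1^4=S^6\mathbb{E}\otimes\Lambda^4\mathbb{F}$. The Chern characters of the symmetric powers of $\widetilde E$ are immediate, $\ch(S^a\widetilde E)=\sum_{i=0}^a e^{(a-2i)y}$, and those of the exterior powers of $\widetilde F$ are read off from $\ch(\Lambda_t\widetilde F)=\prod_{l=1}^2(1+te^{y_l})(1+te^{-y_l})$; for $D_0$ these already give every $\ch\widetilde W_0^j$ directly.

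The hard part will be the factors $(\Lambda^j\mathbb{F}\otimes\mathbb{F}^*)_0$ occurring in $D_1$, whose weights are not visible by inspection. Following Section 5 I would treat each as the irreducible $\mathrm{SU}(4)$-module of highest weight $z_1+\cdots+z_j-z_4$, compute its weight multiplicities by a standard algorithm (see \cite{Sam}, or a system such as \cite{Lie}), and restrict to $\Sp(2)\subset\mathrm{SU}(4)$ via the substitution $z_{2l-1}=x_l$, $z_{2l}=-x_l$. This expresses $\ch(\Lambda^j\widetilde F\otimes\widetilde F^*)_0$ as a symmetric function of $y_1^2,y_2^2$, hence as a polynomial in the Chern classes of $\widetilde F$. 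Multiplying by the appropriate $\ch(S^{j+1}\widetilde E)$ then gives each $\ch\widetilde W_1^j$.

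With every Chern character in hand I would form the alternating sum $\sum_{j=0}^4(-1)^j\ch\widetilde W_k^j$, which Proposition \ref{formula} guarantees is divisible by the universal Euler class $e(\widetilde V_1)=d_2^{\beta_1}(\widetilde V_1)$, and carry out the division in $H^{**}(BG_1;\mathbb{Q})$ after truncating at real degree $8$. Using the translation described in Section 5 together with Proposition \ref{classes}, I would rewrite the quotient in terms of the Pontryagin classes $p_1,p_2$ of $\widetilde V$ and the class $q_1$. It remains to multiply by $\td(TM\otimes\mathbb{C})$: since the Chern roots of the complexification occur in pairs $\pm z$ over the four half-roots $z\in\{y\pm y_1,\,y\pm y_2\}$, one has $\td(TM\otimes\mathbb{C})=\prod_z\left(\frac{z/2}{\sinh(z/2)}\right)^2$, which depends only on $p_1,p_2$. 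Taking the degree-$8$ part of the product and evaluating on $[M]$ yields the two formulas. The rational denominators such as $1920$ emerge from the Todd expansion combined with the division by the Euler class, and the only real danger is arithmetic slippage in this final polynomial manipulation, which is naturally checked with computer algebra.
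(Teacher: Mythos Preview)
Your proposal is correct and follows exactly the approach of the paper: the paper does not give a separate proof of Theorem~\ref{eight} but presents it as the output of the algorithm of Section~5 for $m=2$ and $k\in\{0,1\}$, and your outline is precisely that algorithm specialized to this case, including the passage to $BG_1$, the weight computations for $(\Lambda^j\mathbb{F}\otimes\mathbb{F}^*)_0$ via $\mathrm{SU}(4)$ and restriction to $\Sp(2)$, the division by $e(\widetilde V_1)=d_2^{\beta_1}$, and the final multiplication by $\td(TM\otimes\mathbb{C})$.
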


\begin{thm}
Let $M$ be a $12$-dimensional compact quaternionic manifold. If we denote $p_{1}=p_{1}(TM)$, $p_{2}=p_{2}(TM)$, $p_{3}=p_{3}(TM)$ and $q_{1}=-c_{2}(S^{2}E)$, then we have \begin{align*} \textup{ind}\,D_{0} = \biggl( \frac{31}{241920}p_{1}^{3} -\frac{7}{2304}p_{1}^{2}q_{1} -\frac{11}{60480}p_{1}p_{2} &+\frac{41}{2304}p_{1}q_{1}^{2} + \\ &+\frac{1}{576}p_{2}q_{1} +\frac{1}{15120}p_{3} -\frac{73}{2304}q_{1}^{3} \biggr)[M], \end{align*} \begin{align*}\quad \textup{ind}\,D_{1} = \biggl( -\frac{1}{6720}p_{1}^{3} -\frac{77}{576}p_{1}^{2}q_{1} +\frac{1}{280}p_{1}p_{2} -\frac{35}{576}p_{1}q_{1}^{2} +\frac{7}{18}p_{2}q_{1} -\frac{17}{840}p_{3} -\frac{623}{576}q_{1}^{3} \biggr)[M]. \end{align*} 
\end{thm}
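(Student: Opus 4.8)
The plan is to carry out the algorithm of this section in the case $m=3$, where $2m=6$, performing it separately for $k=0$ and $k=1$. All computations take place on $BG_{1}$ with $G_{1}=\Sp(1)\times\Sp(3)$, where by \eqref{split} everything may be expressed through the Chern classes of $\widetilde{E}$ and $\widetilde{F}$, and the final answer is pushed back to $BG$ and then to $M$.

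First I would assemble the relevant modules. For $k\in\{0,1\}$ and $0\leq j\leq 5$ the bundle $(B\pi)^{*}(\widetilde{W}^{j}_{k})$ equals $S^{j+k}\widetilde{E}\otimes(\Lambda^{j}\widetilde{F}\otimes S^{k}\widetilde{F}^{*})_{0}$, and the top term is $(B\pi)^{*}(\widetilde{W}^{6}_{k})=S^{2(3+k)}\widetilde{E}\otimes\Lambda^{6}\widetilde{F}$. The $\widetilde{E}$-factors present no difficulty: from $c(\widetilde{E})=(1+y)(1-y)$ one has $c_{2}(\widetilde{E})=-y^{2}$, and $\ch(S^{j+k}\widetilde{E})=\sum_{a+b=j+k}\mathrm{e}^{(a-b)y}$ is a power series in $-y^{2}$, hence in $p_{1}(\beta_{1})=-4c_{2}(\widetilde{E})$ by \eqref{pontr}.

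The essential work lies in the $\widetilde{F}$-factors, that is, in decomposing the $\Sp(3)$-modules $(\Lambda^{j}\mathbb{F}\otimes S^{k}\mathbb{F}^{*})_{0}$ into weights. Following the recipe of this section, I would regard $\mathbb{F}$ as the standard $\mathrm{SU}(6)$-module with weights $z_{1},\ldots,z_{6}$, read off the highest weight of the summand in question as $(z_{1}+\ldots+z_{j})-kz_{6}$, generate the full weight system of the corresponding irreducible $\mathrm{SU}(6)$-module by a standard multiplicity algorithm (in practice using LiE, \cite{Lie}), and restrict to $\Sp(3)$ by the substitution described above. This produces the Chern character of $(\Lambda^{j}\widetilde{F}\otimes S^{k}\widetilde{F}^{*})_{0}$ as a symmetric series in the $-y_{l}^{2}$, hence a polynomial in the Chern classes $c_{2l}(\widetilde{F})$.

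With every Chern character available, the remaining steps are deterministic. I would form the alternating sum $\sum_{j=0}^{6}(-1)^{j}\ch((B\pi)^{*}\widetilde{W}^{j}_{k})$ on the right of \eqref{eqn2}, re-express it through Proposition \ref{classes} and \eqref{pontr} in the classes $d^{\beta_{1}}_{1},d^{\beta_{1}}_{2},d^{\beta_{1}}_{3}$ and $p_{1}(\beta_{1})$, and divide by the Euler class $e(\widetilde{V}_{1})=d^{\beta_{1}}_{3}(\widetilde{V}_{1})$, the exactness of this division being guaranteed by Proposition \ref{formula}. Returning to $BG$, applying $f^{**}$, multiplying by $\td(TM\otimes\mathbb{C})$, which pulls back to $\td(\widetilde{E}\otimes\widetilde{F})$ by \eqref{split}, and extracting the degree-$12$ part on $[M]$ then yields the two stated formulas once one writes $q_{1}=p_{1}(f^{*}\beta)=-c_{2}(S^{2}E)$ and $p_{i}=p_{i}(TM)$. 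The main obstacle is twofold: the weight decomposition of the six sizeable $\Sp(3)$-modules $(\Lambda^{j}\mathbb{F}\otimes S^{k}\mathbb{F}^{*})_{0}$, and the volume of degree-$12$ polynomial bookkeeping; both are best delegated to computer algebra, after which the change of basis among the three families of characteristic classes, the division by $d^{\beta_{1}}_{3}$, and the final truncation are routine.
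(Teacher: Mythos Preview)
Your proposal is correct and follows precisely the algorithm laid out in this section, specialized to $m=3$ and $k\in\{0,1\}$; the paper itself offers no further proof beyond stating that these calculations were carried out (with the aid of LiE and Maple). The one minor imprecision is your phrase that $\td(TM\otimes\mathbb{C})$ ``pulls back'' to $\td(\widetilde{E}\otimes\widetilde{F})$: more accurately, $\td(\widetilde{V}\otimes\mathbb{C})$ on $BG$ pulls back via $B\pi$ to $\td(\widetilde{E}\otimes\widetilde{F})$ on $BG_{1}$, and then $f^{**}$ sends it to $\td(TM\otimes\mathbb{C})$ on $M$---but this does not affect the substance of the argument.
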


Recall that the analytical index is an integer and so the above formulas evaluated on the fundamental class must give an integer as well. We will verify this for the quaternionic projective spaces.

\begin{ex} Let $M=\mathbb{H}\mathrm{P}^{m}$. In this case the vector bundles $E$ and $F$ exist globally and $E$ is precisely the tautological line bundle. The cohomology ring $H^{*}(\mathbb{H}\mathrm{P}^{m};\mathbb{Z})$ is generated by the class $u=-c_2(E)$, which also satisfies $(u^{m})\,[\mathbb{H}\mathrm{P}^{m}]=1$. Moreover, one can show (see \cite{BH}) that the Pontryagin classes of $T\mathbb{H}\mathrm{P}^{m}$ are given by $$p(T\mathbb{H}\mathrm{P}^{m}) = (1+u)^{2m+2}(1+4u)^{-1},$$ where $(1+4u)^{-1}$ is the inverse formal power series to $1+4u$. Finally, by \eqref{pontr} we have $q_{1} = -4c_{2}(E) = 4u$.

Take first $m=2$. Then the Pontryagin classes are $p_1(T\mathbb{H}\mathrm{P}^{2})=2u$ and $p_2(T\mathbb{H}\mathrm{P}^{2})=7u^2$ and inserting into the formulas from Theorem \ref{eight} we obtain \begin{align*} \mathrm{ind}\,D_{0} =  1, \quad \mathrm{ind}\,D_{1} = 35.\end{align*}
Similarly, for $m=3$ we have $p_{1}(T\mathbb{H}\mathrm{P}^{3})=4u$, $p_{2}(T\mathbb{H}\mathrm{P}^{3})=12u^{2}$, $p_{3}(T\mathbb{H}\mathrm{P}^{3})=8u^{3}$ and \begin{align*} \mathrm{ind}\,D_{0} =  -1, \quad \textup{ind}\,D_{1} = -63. \end{align*}
\end{ex}

A drawback of the index formulas is that they depend on the class $q_{1}$, which is not easy to compute. However, by taking integral linear combinations we may try to eliminate the terms containing $q_{1}$ and thus obtain some integrality conditions on the Pontryagin classes of a quaternionic manifold. Consider for example the two formulas from Theorem \ref{eight} and the following linear combinations \begin{align*} 11\cdot \mathrm{ind}\,D_{0}+ \mathrm{ind}\,D_{1} &= \biggl(\frac{143}{960}p_{1}^{2}-\frac{89}{240}p_{2}+3q_{1}^{2}\biggr)[M], \\ 50\cdot \mathrm{ind}\,D_{0}- 2\cdot\mathrm{ind}\,D_{1} &= \biggl( -\frac{17}{480}p_{1}^{2}-3p_{1}q_{1}+\frac{71}{120}p_{2}\biggr)[M].\end{align*} But $p_{1}$ and $q_{1}$ are Pontryagin classes of some vector bundles and so they come from integral cohomology. In particular, evaluation of $p_{1}q_{1}$ and $q_{1}^{2}$ on the fundamental class of $M$ gives an integer. But then by evaluating the rest of the above formulas we must again obtain an integer (and not only a rational number).

\begin{cor}\label{cond}
Let $M$ be an 8-dimensional compact quaternionic manifold. Then the following expressions are integers $$ \biggl(\frac{143}{960}p_{1}(TM)^{2}-\frac{89}{240}p_{2}(TM)\biggr)[M], \quad \biggl( -\frac{17}{480}p_{1}(TM)^{2}+\frac{71}{120}p_{2}(TM)\biggr)[M].$$
\end{cor}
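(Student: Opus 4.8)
The plan is to exploit the fact that the analytical indices $\textup{ind}\,D_0$ and $\textup{ind}\,D_1$ computed in Theorem \ref{eight} are integers, and to form suitable integer linear combinations of the two index formulas in which every monomial containing the class $q_1$ acquires an integer coefficient. Since $q_1$ itself is integral, such monomials will evaluate to integers on $[M]$, and subtracting them off will leave precisely the stated $p_1^2$–$p_2$ combinations, which must therefore be integers as well.

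First I would record that all three classes $p_1=p_1(TM)$, $p_2=p_2(TM)$ and $q_1=-c_2(S^2E)$ lie in the image of $H^{*}(M;\mathbb{Z})\to H^{*}(M;\mathbb{Q})$: the first two are Pontryagin classes of the real tangent bundle, and the third is $-1$ times the second Chern class of the globally defined complex bundle $S^{2}E$, as observed at the end of the preceding section. Consequently the evaluations $(p_1 q_1)[M]$ and $(q_1^{2})[M]$ are integers.

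Next I would determine which integer pairs $(a,b)$ render the combination $a\cdot\textup{ind}\,D_0+b\cdot\textup{ind}\,D_1$ useful. Reading off the coefficients from Theorem \ref{eight}, the coefficient of $p_1 q_1$ in this combination is $\frac{-a+11b}{24}$ and that of $q_1^{2}$ is $\frac{a+25b}{12}$. Requiring \emph{both} of these to be integers yields the congruences $a\equiv 11b\pmod{24}$ and $a\equiv -b\pmod{12}$. The choices $(a,b)=(11,1)$ and $(a,b)=(50,-2)$ each satisfy both congruences, and they produce exactly the two displayed combinations $11\,\textup{ind}\,D_0+\textup{ind}\,D_1$ and $50\,\textup{ind}\,D_0-2\,\textup{ind}\,D_1$, in which one $q_1$-monomial drops out entirely while the surviving one has integer coefficient $3$ and $-3$ respectively.

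Finally I would conclude as follows: in each case the left-hand side is an integer linear combination of integers, hence an integer, while the $q_1$-terms present there (an integer multiple of $(q_1^{2})[M]$ in the first case and of $(p_1 q_1)[M]$ in the second) are integers by the first step. Transferring these terms to the left shows that $\bigl(\frac{143}{960}p_1^{2}-\frac{89}{240}p_2\bigr)[M]$ and $\bigl(-\frac{17}{480}p_1^{2}+\frac{71}{120}p_2\bigr)[M]$ are integers. I do not anticipate any genuine obstacle here; the only point requiring care is that one needs the \emph{surviving} $q_1$-monomial, and not merely the eliminated one, to have an integer coefficient — this is exactly why both congruence conditions must be imposed simultaneously, and it is what permits the conclusion to be extracted from only two index formulas.
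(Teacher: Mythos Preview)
Your proposal is correct and follows exactly the argument given in the paper: the same integer combinations $(a,b)=(11,1)$ and $(50,-2)$ are used, and the conclusion is drawn from the integrality of $(p_1q_1)[M]$ and $(q_1^2)[M]$. The only difference is that you spell out the congruence conditions on $(a,b)$ explicitly, whereas the paper simply presents the two linear combinations without deriving them.
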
  

Of course we may deal with other integral linear combinations $a\cdot \mathrm{ind}\,D_{0}+b\cdot \mathrm{ind}\,D_{1}$ to derive more integrality conditions and similarly for the $12$-dimensional manifolds. 

As a final point, let us remark that for manifolds admitting a $\mathrm{GL}(m,\mathbb{H})$-structure with a torsion-free connection (see also Example \ref{hyper}) the formulas simplify considerably because the vector bundle $E$ is trivial and hence $q_{1}=0$. Moreover, the vector bundle $F$ is isomorphic to the complex tangent $T^{c}M$ of $M$. Assuming $m=2$ we then easily compute that $p_{1}(TM)=-2c_{2}(F)$ and $p_{2}(TM)=c_{2}(F)^{2}+2c_{4}(E)$ and substituting to the formula from Theorem \ref{eight} for the Salamon's complex $D_{0}$ we obtain \begin{align*} \mathrm{ind}\,D_{0} = \left(\frac{1}{80}c_{2}(F)^{2}-\frac{1}{240}c_{4}(F)\right)[M].\end{align*}
But this is exactly three times the top-dimensional part of $\td(F)=\td(T^{c}M)$, which verifies that the general formula for the Salamon's complex from Theorem \ref{eight} coincides with that in Theorem \ref{hyperindex}.

\section*{Acknowledgements}
The paper presents the results of my diploma thesis and in this connection thanks go to Martin \v{C}adek, my supervisor. Furthermore, I am grateful to Andreas \v{C}ap, Vladim\'{i}r Sou\v{c}ek and Petr Somberg for several discussions and to Luk\'{a}\v{s} Vok\v{r}\'{i}nek for his remarks and comments. The research was supported by the grant MSM0021622409 of the Czech Ministry of Education.

\end{document}